\newtheorem{theorem}{Theorem}
\newtheorem{lemma}{Lemma}
\newtheorem{corollary}{Corollary}
\newcommand\bbR{\mathbb R}
\newcommand\dt{\Delta t}
\theoremstyle{definition}
\newtheorem{remark}{Remark}
\title{Efficient sum-of-exponentials approximations 
for the heat kernel and their applications} 
\author{Shidong Jiang
\thanks{Department of Mathematical Sciences, New Jersey Institute of
Technology, Newark, New Jersey 07102.
{{\em email}: {\sf shidong.jiang@njit.edu}}
Thw work of this author was supported in part by NSF under 
grant CCF-0905395.}
\and
Leslie Greengard 
\thanks{Courant Institute of Mathematical Sciences, New York University,
New York, NY 10012.
{{\em email}: {\sf greengard@courant.nyu.edu}}
The work of this author was supported in part by 
the U.S. Department of Energy under contract DEFGO288ER25053.}
\and
Shaobo Wang 
\thanks{Department of Mathematical Sciences, New Jersey Institute of
Technology, Newark, New Jersey 07102.
{{\em email}: {\sf sw228@njit.edu}}}
}
\begin{document}
\maketitle

\begin{abstract}
In this paper, we show that 
efficient separated sum-of-exponentials approximations can be constructed
for the heat kernel in any dimension. In one space dimension,
the heat kernel admits an approximation
involving a number of terms that is of the order
$O(\log(\frac{T}{\delta}) (\log(\frac{1}{\epsilon})+\log\log(\frac{T}{\delta})))$
for any $x\in\bbR$ and $\delta \leq t \leq T$,
where $\epsilon$ is the desired precision. 
In all higher dimensions,
the corresponding heat kernel
admits an approximation involving only
$O(\log^2(\frac{T}{\delta}))$ terms for fixed accuracy $\epsilon$.
These approximations can be 
used to accelerate integral equation-based methods 
for boundary value problems governed by the heat equation in 
complex geometry. The resulting algorithms are nearly optimal.
For $N_S$ points in the spatial discretization and $N_T$ time steps,
the cost is $O(N_S N_T \log^2 \frac{T}{\delta})$
in terms of both memory and CPU time for fixed accuracy $\epsilon$. 
The algorithms can be parallelized in a straightforward manner. Several
numerical examples are presented to illustrate the accuracy and stability 
of these approximations.
\end{abstract}

\section{Introduction}

Our study of the heat kernel and its approximation is motivated by an
interest in developing efficient methods for the solution of the 
heat equation
\begin{equation}
\label{heateq1}
 U_t = \Delta U, \quad U(x,0) = U_0(x) \, ,
\end{equation}
subject, for example, to Dirichlet boundary conditions 
\begin{equation}
\label{heatbc}
U(x,t) = f(x,t) 
\big|_{\Gamma(t)}
\end{equation}
in a {\em moving} space-time domain
$\Omega_T = \prod_{t=0}^T\Omega(t)$, where $\Gamma(t)$ is 
the boundary of $\Omega(t)$.
More precisely, we are interested in accelerating
methods based on heat potentials \cite{FRIEDMAN,POGOR}, which seek to 
represent $U$ in terms of a single or double layer potential,
\begin{equation}\label{slp}
S(\sigma)(x,t)=\int_{0}^t\int_{\Gamma(t)} 
\frac{\partial}{\partial n_y}G(x-y,t-\tau)
\sigma(y,\tau)ds_yd\tau
\end{equation}
and
\begin{equation}\label{dlp}
D(\sigma)(x,t)=\int_{0}^t\int_{\Gamma(t)} 
\frac{\partial}{\partial n_y}G(x-y,t-\tau)
\sigma(y,\tau)ds_yd\tau \, ,
\end{equation}
respectively, where 
$G(x,t)$ is the heat kernel
\[ G(x,t)=\frac{1}{({4\pi t})^{d/2}}e^{-\frac{|x|^2}{4t}} \, , \]
$n_y$ denotes the unit outward normal to $\Gamma(\tau)$ at the point
$y=y(\tau)$, and $\sigma$ is an unknown surface density.

For the Dirichlet problem (\ref{heateq1}), it is standard to represent
$U$ using the double layer heat potential:
\begin{equation}\label{Urep}
U(x,t) = D(\sigma)(x,t) + V(x,t)
\end{equation}
where $V(x,t)$ denotes the {\em initial potential}
\[ V(x,t) = \int_{\Omega(0)} G(x-y,t) U_0(y)ds_y \, .  \]
Imposing the boundary condition and using standard jump 
relations \cite{FRIEDMAN,POGOR} leads to the 
Volterra integral equation of the second kind
\begin{equation}
\label{volterraeq}
-\frac{1}{2}\sigma(x,t) + D^\ast(\sigma)(x,t) = f(x,t) - V(x,t)
\end{equation}
on the space-time boundary
$\Gamma_T = \prod_{t=0}^T\Gamma(t)$.
Here, $D^\ast(\sigma)(x,t)$ denotes the principal value of the 
double layer potential.

It is convenient both analytically and numerically to decompose the
double layer potential into two pieces: a {\em history} part $D_H$ and a 
{\em local} part $D_L$.
Letting $\delta$ be a small positive parameter, we write 
\[ 
  D(\sigma)(x,t) = D_H(\sigma)(x,t,\delta) + D_L(\sigma)(x,t,\delta) ,
\]
where
\begin{eqnarray}
D_H(\sigma)(x,t,\delta) = 
\int_{0}^{t-\delta}\int_{\Gamma(t)} 
\frac{\partial}{\partial n_y}G(x-y,t-\tau)
\sigma(y,\tau)ds_yd\tau 
\label{eq:dnh}
\end{eqnarray}
and 
\begin{eqnarray}
D_L(\sigma)(x,t,\delta) =  
\int_{t-\delta}^t\int_{\Gamma(t)} 
\frac{\partial}{\partial n_y}G(x-y,t-\tau)
\sigma(y,\tau)ds_yd\tau \, .
\label{eq:dnl}
\end{eqnarray}

A major advantage of the integral equation formulation is that 
high order accuracy can be achieved through the use of 
suitable quadratures, even when the boundary is in motion
(see, for example, \cite{li2}). A second advantage is that the
 equation (\ref{volterraeq}) can be written in the form
\begin{equation}
\label{volterraeq2}
-\frac{1}{2}\sigma(x,t) + D_L^\ast(\sigma)(x,t) = f(x,t) - V(x,t) - 
D_H^\ast(\sigma)(x,t)
\end{equation}
Since the history part is already known and the norm of $D_L$ can be 
shown to be of the order $\| D_L \| = O(\sqrt{\delta})$ \cite{GL:PDE},
the equation (\ref{volterraeq2}) can be solved by $2k$ steps of fixed
point iteration to an accuracy of $\delta^k$. 

There is a substantial engineering literature on boundary integral
eqautions for heat flow (see, for example, \cite{BR:EDITOR,MP:EDITORS}).
In the mathematical literature, it is worth noting the work of 
McIntyre \cite{mcintyre}, Noon \cite{Noon}, Chapko and Kress \cite{chapko},
and the survey article of Costabel \cite{costabel}.

The major disadvantage of 
numerical methods based on heat potentials, however, is their 
history-dependence. Simply evaluating
$D \sigma$ at a sequence of time steps
$t_n = n \Delta t$, for $n=0,\dots,N_T$, clearly
requires an amount of work of the order $O(N_T^2 N_S^2)$,
where $N_S$ denotes the number of points in the discretization of 
the boundary.
While direct discretization methods have been used for
(\ref{volterraeq2}), in the absence of fast algorithms it is 
difficult to argue that integral equation methods would be methods of 
choice for large-scale simulation.
In the last two decades, however, a variety of schemes have been developed
to overcome this obstacle.
The scheme  of \cite{greengard2} used discrete Fourier methods
to represent the history part, while \cite{BRAT:MEIRON} replaced the 
Fourier representation with a regular (physical space) grid on which
to update $D_H$. In \cite{greengard1}, the problem of exterior heat
flow was considered using the continuous Fourier transform in the 
spatial variables.
Sethian and Strain \cite{SETHSTR} and 
Ibanez and Power \cite{ibanez} developed variants of the fast 
algorithm of \cite{greengard2} in the analysis of solidification, melting
and crystal growth.
More recently,
Tausch \cite{tausch} developed an interesting space-time 
``fast-multipole-like" method 
that also overcomes the cost of history-dependence (although it involves 
a hierarchical decomposition of the entire space-time domain).

A somewhat different approach to overcoming history dependence is based
on using the Laplace transform in the time variable, leading to 
what are sometimes called "Laplace transform boundary element methods"
\cite{costabel,hagstrom,lopez3,schadle}.
That is the approach we consider here, for the following reasons:
\begin{enumerate}
\item The Fourier methods of \cite{greengard1,greengard2} assume that
the spatial domain of interest is finite (even when considering exterior
problems). Both the computation of the Fourier modes and the evaluation
of the solution at large distances can involve highly oscillatory integrals.
\item The required number of Fourier modes in \cite{greengard1,greengard2} 
is $O \left(  \left( \frac{a}{\sqrt{\delta}} \right)^d \right)$,
where $a$ is a bound on the extent of the domain in each spatial direction.
This makes the method inefficient for small $\delta$.
\end{enumerate}
Using the Laplace transform avoids both of these difficulties and 
leads to a sum-of-exponentials approximation of the heat kernel that
is asymptotically optimal (although in the end, hybrid schemes may 
yield better constants).

Sum-of-exponentials approximations of convolution kernels 
have many applications in scientific 
computing. They permit, for example, the construction of diagonal forms for 
translation operators. We refer the reader to 
\cite{cheng,yarvin} for their use in the elliptic case in accelerating
fast multipole methods. They also permit the development of 
highly efficient nonreflecting boundary conditions
for the wave and the Schr\"{o}dinger equations
\cite{alpert1,alpert2,hagstrom,jiang1,jiang2,lubich}.

Function approximation using sums of exponentials is
a highly nonlinear problem, so that
the numerical construction of such approximations is nontrivial. 
In a series of papers, Beylkin and Monz\'{o}n 
\cite{beylkin1,beylkin2,beylkin3}
carried out a detailed investigation and developed 
efficient and robust algorithms when given function
values on a fixed interval. 
In some cases, the function of interest can be represented as a parametrized 
integral with exponential functions in the integrand, in which case 
generalized Gaussian quadrature methods \cite{bremer,ma} can also be used. 
In other cases, however, the function being approximated,
say $f(t)$ is only accessible as the 
inverse Laplace transform of an explicitly computable function, 
say $\hat{f}(s)$.
If $\hat{f}(s)$ is sectorial (i.e., holomorphic on the complement of some acute 
sector containing the negative real axis for $s \in \mathbb{C}$), then 
the truncated trapezoidal or midpoint rule can be used
in conjunction with carefully chosen contour integrals, leading to efficient
and accurate sum-of-exponentials approximations. 
L\'{o}pez-Fern\'{a}ndez, Palencia, and Sch\"{a}dle, for example, have 
made effective use of various hyperbolic contours \cite{lopez1,lopez2}. 
On the other hand, if the Laplace transform $\hat{f}(s)$
does not have such well-defined properties, one may instead try to find a 
sum-of-poles approximation in the $s$-domain, from which
a sum-of-exponentials approximation for $f(t)$ can be obtained
by inverting the sum-of-poles approximation analytically 
(see, for example, \cite{xu}). 

In this paper, we construct
efficient {\em separated} sum-of-exponentials 
approximations for the free-space heat kernel. 
In particular, We show that the one-dimensional heat kernel
$\frac{1}{\sqrt{4\pi t}}e^{-|x|^2/(4t)}$ admits an approximation 
of the form
\[ \frac{1}{\sqrt{4\pi t}}e^{-|x|^2/(4t)} \approx
\sum_{i=1}^{N_1} w_i e^{-s_i t}e^{\sqrt{s_i} |x|}  \]
for any $t\in [\delta,T]$ and $x\in \bbR$, 
where $N_1=O(\log(\frac{T}{\delta}) (\log(\frac{1}{\epsilon}) +\log\log(\frac{T}{\delta})))$ 
where $\epsilon$ is the desired precision. In the $d$-dimensional 
case ($d>1$), the heat kernel 
$\frac{1}{(4\pi t)^{d/2}}e^{-|x|^2/(4t)}$ admits 
an approximation of the form 
\begin{equation}
\label{expsumrep}
 \frac{1}{(4\pi t)^{d/2}}e^{-|x|^2/(4t)} \approx
\sum_{j=1}^{N_2} \tilde{w}_j e^{-\lambda_j t} 
\sum_{i=1}^{N_1} w_i e^{-s_i t}e^{\sqrt{s_i} |x|} 
\end{equation}
for all $t\in [\delta,T]$ 
and $x\in \bbR^d$, where 
$N_1=O(\log(\frac{T}{\delta}) (\log(\frac{1}{\epsilon})+\log\log(\frac{T}{\delta})) )$ and
$N_2=O\left( \log\left(\frac{1}{\epsilon}\right)\cdot \left(\log\frac{T}{\delta}+\log\frac{1}{\epsilon}\right)\right)$.
Both our construction and proof draw on earlier work, especially
\cite{beylkin3,lopez2}. 

The paper is organized as follows. In Section 2, we collect some useful results
mainly from \cite{beylkin3,lopez2}.
After developing the sum-of-exponentials representations in 
Section \ref{sec2}, we then discuss 
their application to boundary value problems governed by the heat equation in Section 4.
The basic idea is to use the 
sum-of-exponentials approximation for the computation of the 
history part (such as $D_H(\sigma)$ above), since the parameter
$\delta$ separates the time integration variable $\tau$ from the current
time $t$ so that $t-\tau \in [\delta,T]$. 
Since the temporal dependence for each term in 
(\ref{expsumrep}) involves a simple
exponential, the convolution in time can be easily computed
using standard recurrence relations,
as in \cite{greengard1,greengard2}. 
Furthermore, the convolutions in space can be evaluated
by a variety of fast algorithms, such as variants of 
the fast multipole method. These issues are discussed in 
section \ref{sec:applications}. With $\delta = \Delta t$, where 
$\Delta t$ is a fixed time step,
the total computational 
cost for evaluating $D_H(\sigma)$ is easily seen to be of the order
$O \left( N_S N_T \log(N_S) \log^2(N_T) \right)$,
and the memory
requirements are of the order $O \left( N_S \log^2(N_T) \right)$ for fixed accuracy $\epsilon$.

\section{Analytical preliminaries}
In this section, we collect some results from \cite{beylkin3,lopez2} which will be used 
in subsequent sections.

The following lemma provides an error estimate for the sum-of-exponentials 
approximation
obtained by the truncated trapezoidal rule discretization of 
a certain contour integral in the Laplace domain.
\begin{lemma}\label{lemma1}
\emph{[Adapted from \cite{lopez2}.]}
Suppose that $U(z)$ is holomorphic on $W=\mathbb{C}\setminus 
(-\infty,0]$ and satisfies the estimate 
\begin{equation}\label{1.1}
\|U(z)\| \leq \frac{1}{2|z|^{1/2}}
\end{equation}
for $z\in W$. Suppose that $u(t)=\frac{1}{2\pi i}\int_{\Gamma}e^{tz}U(z)dz$ is the inverse Laplace
transform of $U$. Suppose further that $\alpha$ and $\beta$ satisfy the condition
$0<\alpha-\beta<\alpha+\beta<\frac{\pi}{2}$, $0<\theta<1$, and that 
$\Gamma$ is chosen to be the 
left branch of the hyperbola
\begin{equation}\label{1.2}
\Gamma=\{\lambda T(x)
: x\in \mathbb{R}\},
\end{equation}
where $T(x)=(1-\sin(\alpha+ix))$. Finally, suppose that $u_n(t)$ is the approximation
to $u(t)$ given by the formula
\begin{equation}\label{1.3}
u_n(t)=-\frac{h\lambda}{2\pi i}\sum_{k=-n}^n U(\lambda T(kh))T'(kh)e^{\lambda T(kh)t}.
\end{equation}
Then the choice of parameters
\begin{align}
\label{1.4}
h &=\frac{a(\theta)}{n}, \\
\label{1.5}
\lambda &=\frac{2\pi \beta n (1-\theta)}{T a(\theta)}, \\
\label{1.6}
a(\theta) &=
\operatorname{arccosh}\left(\frac{2T}{\delta (1-\theta)\sin\alpha}\right) 
\end{align}
leads to the uniform estimate on $\delta\leq t \leq T$,
\begin{equation}\label{1.7}
\|u(t)-u_n(t)\|\leq \frac{1}{\sqrt{t}}\phi(\alpha,\beta)\cdot L(\lambda \delta \sin(\alpha-\beta)/2)
\cdot e^{-\frac{2\pi \theta \beta n}{a(\theta)}},
\end{equation}
where 
\begin{equation}\label{1.8}
\phi(\alpha,\beta)=\frac{2}{\pi}\sqrt{\frac{1+\sin(\alpha+\beta)}{1-\sin(\alpha+\beta)}}(e\sin(\alpha-\beta))^{1/2},
\end{equation}

\begin{equation}\label{1.9}
L(x)=1+|\ln(1-e^{-x})|.
\end{equation}
\end{lemma}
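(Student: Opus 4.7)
The plan is to split the total error into a trapezoidal-rule (aliasing) error on the whole real line and a truncation-tail error, and to bound each piece separately using the holomorphy of $U$ together with the hyperbolic geometry of $\Gamma$. Writing
\[
u(t) - u_n(t) \;=\; \bigl(u(t)-u_\infty(t)\bigr) \;+\; \bigl(u_\infty(t)-u_n(t)\bigr),
\]
where $u_\infty(t)$ denotes the infinite trapezoidal sum in (\ref{1.3}) with $\sum_{k=-\infty}^\infty$ replacing $\sum_{k=-n}^n$, the first difference captures the quadrature error committed by replacing the contour integral by an infinite midpoint/trapezoidal sum on the parametric line, while the second captures the tail one drops by restricting to $|k|\le n$.

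For the aliasing piece I would apply the standard Poisson-summation/periodization argument to the integrand $F(x):=U(\lambda T(x))\,T'(x)\,e^{\lambda T(x)t}$. The condition $0<\alpha-\beta<\alpha+\beta<\pi/2$ is exactly what is needed so that the map $x\mapsto\lambda T(x)=\lambda(1-\sin(\alpha+ix))$ extends holomorphically to the horizontal strip $|\mathrm{Im}\,x|<\beta$ with image contained in $W=\mathbb{C}\setminus(-\infty,0]$; hence $F$ inherits analyticity there. Shifting the periodized integrand to the lines $\mathrm{Im}\,x=\pm\beta$ produces the basic Fourier decay factor $e^{-2\pi\beta/h}$, and the pointwise bound $|U(z)|\le\tfrac12|z|^{-1/2}$ together with an explicit estimate of $|T'(x\pm i\beta)|$ and of $\mathrm{Re}(\lambda T(x\pm i\beta)t)$ produces the geometric constant $\sqrt{(1+\sin(\alpha+\beta))/(1-\sin(\alpha+\beta))}$ appearing in $\phi$, the $1/\sqrt{t}$ prefactor, and the logarithmic factor $L\bigl(\lambda\delta\sin(\alpha-\beta)/2\bigr)$ which quantifies the cost of integrating the $|z|^{-1/2}$ singularity of $U$ along the shifted contour's closest approach to the branch point at $0$.

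For the truncation tail, the observation is that along the contour $\mathrm{Re}(\lambda T(x))=\lambda(1-\sin\alpha\cosh x)$ decays like $-\lambda\sin\alpha\,\cosh x/2$ for $|x|\gg 1$, so that $|e^{\lambda T(kh)t}|$ is bounded by a super-Gaussian in $|k|$ for $t\ge\delta$. Combined with the $|U|$-bound, this estimate reduces the tail to a sum dominated by $e^{-\lambda\delta\sin\alpha\cosh(nh)/2}$. Substituting the prescribed $h=a(\theta)/n$ and using $\cosh(a(\theta))=2T/(\delta(1-\theta)\sin\alpha)$, this tail collapses to $e^{-\pi\beta n(1-\theta)/a(\theta)\cdot ?}$; matching it against the aliasing bound $e^{-2\pi\beta n/a(\theta)\cdot\theta}$ after inserting the prescribed $\lambda$ gives the common rate $e^{-2\pi\theta\beta n/a(\theta)}$ appearing in (\ref{1.7}). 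The free parameter $\theta\in(0,1)$ controls how the same total exponent is shared between the two sources of error.

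The main obstacle is the delicate bookkeeping near the branch point $z=0$: because $\Gamma$ passes within distance $O(\lambda\delta\sin(\alpha-\beta))$ of the origin when shifted into the strip, the $|z|^{-1/2}$ bound is not integrable uniformly, and the logarithmic term $L$ is precisely the price one pays. Establishing $L$ with the correct constant requires the auxiliary fact, drawn from \cite{lopez2}, that the integral of $|\lambda T(x+i\beta)|^{-1/2}|T'(x+i\beta)|$ against the exponential weight $e^{\mathrm{Re}(\lambda T(x+i\beta)t)}$ evaluates, via a change of variables, to a dilation of $1+|\ln(1-e^{-\cdot})|$. Once this technical estimate is in hand, the remainder of the argument is the routine balancing of the two exponentially small error contributions described above.
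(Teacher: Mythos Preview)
Your plan is sound and would work: the aliasing/truncation split, the Poisson-summation bound on the strip $|\mathrm{Im}\,x|<\beta$, and the tail estimate via $\mathrm{Re}(\lambda T(x))=\lambda(1-\sin\alpha\cosh x)$ are exactly the ingredients behind the result. However, the paper does \emph{not} reprove any of this. Since the lemma is explicitly flagged as ``Adapted from \cite{lopez2}'', the paper's proof simply quotes the finished estimate from that reference --- specifically, setting $s=\tfrac12$, $\mu=\tfrac12$ in Remark~2 of \cite{lopez2} yields
\[
\|u(t)-u_n(t)\|\le \frac{\phi(\alpha,\beta)\,e^{\lambda t}}{2\sqrt{t}}\,L\!\bigl(\lambda t\sin(\alpha-\beta)/2\bigr)\left(\frac{1}{e^{2\pi\beta/h}-1}+e^{-\lambda t\sin\alpha\cosh(nh)/2}\right),
\]
and the remaining five lines just insert the parameter choices \eqref{1.4}--\eqref{1.6}: $e^{\lambda t}\le e^{2\pi\beta n(1-\theta)/a(\theta)}$ on $[\delta,T]$, each bracketed term is at most $e^{-2\pi\beta n/a(\theta)}$, and $L$ is monotone decreasing so $L(\lambda t\cdot)\le L(\lambda\delta\cdot)$. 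Multiplying gives \eqref{1.7}.

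So your proposal reconstructs the content of \cite{lopez2} rather than citing it. That buys self-containment and makes the roles of $\alpha,\beta,\theta$ transparent, at the cost of a page of contour bookkeeping the paper deliberately outsources. One small caution: the ``$?$'' you left in the tail exponent is where the actual balancing happens --- with $h=a(\theta)/n$ and $\lambda=2\pi\beta n(1-\theta)/(Ta(\theta))$, one gets $\lambda\delta\sin\alpha\cosh(nh)/2 = \lambda\delta\sin\alpha\cdot\tfrac{T}{\delta(1-\theta)\sin\alpha} = 2\pi\beta n/a(\theta)$ exactly, matching the aliasing term; the factor $e^{\lambda t}$ then degrades both by $(1-\theta)$, leaving the net exponent $-2\pi\theta\beta n/a(\theta)$. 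If you pursue your route, make sure that algebra closes cleanly.
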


\begin{proof}
Choosing $s=\frac{1}{2}$, $\mu=\frac{1}{2}$ in the first equation of 
Remark 2 of \cite{lopez2}, we have the estimate
\begin{eqnarray}
\|u(t)-u_n(t)\| &\leq&  \hspace{1in} \nonumber \\
\label{1.10}
& & \hspace{-1.2in} 
\frac{\phi(\alpha,\beta) e^{\lambda t}}{2\sqrt{t}} \,  L(\lambda t \sin(\alpha-\beta)/2)
 \left(\frac{1}{e^{2\pi \beta/h}-1}+\frac{1}{e^{\lambda t \sin \alpha \cosh(nh)/2}}\right).
\end{eqnarray}
With the choice of parameters given by \eqref{1.4}-\eqref{1.6}, it is easy to see that
\begin{align}\label{1.11}
e^{\lambda t}&=e^{\frac{2\pi \beta n (1-\theta) t}{T a(\theta)}}\leq e^{\frac{2\pi \beta n (1-\theta)}{a(\theta)}}, \qquad \delta\leq t\leq T,\\
\label{1.12}
\frac{1}{e^{2\pi \beta }{h}-1}&\approxeq e^{-2\pi\beta/h} = e^{-\frac{2\pi\beta n}{a(\theta)}},\\
\label{1.13}
\frac{1}{e^{\lambda t \sin \alpha \cosh(nh)}/2}&=e^{-\frac{2\pi \beta n t}{a(\theta)\delta}}
\leq e^{-\frac{2\pi\beta n}{a(\theta)}}, \qquad \delta\leq t\leq T,
\end{align}
Finally, it is easy to see that $L(x)$ is decreasing in $x$ and thus
\begin{equation}\label{1.14}
L(\lambda t \sin(\alpha-\beta)/2)\leq L(\lambda \delta \sin(\alpha-\beta)/2), \qquad \delta\leq t\leq T.
\end{equation}

Substituting \eqref{1.11}-\eqref{1.14} into \eqref{1.10}, we obtain \eqref{1.7}.
\end{proof}

\begin{remark} \label{rem1}
The parameters $\alpha$, $\beta$, and $\theta$ are available for 
optimization. 
For our problem, i.e., the sum-of-exponentials approximation of 
the 1D heat kernel, we choose $\alpha=0.8$, $\beta=0.7$
in \eqref{1.8}. We have also
tested various values of $\theta$ in $(0,1)$. 
Numerical experiments indicate that the number of nodes needed
for a prescribed accuracy is relatively insensitive when $\theta$ is
in the range $[0.85,0.95]$.
\end{remark}

Combining Lemma \ref{lemma1} and Remark \ref{rem1}, we have the following Corollary.
\begin{corollary}\label{cor1}
Suppose that $0<\epsilon<0.1$ is the prescribed relative error and that $T\geq 1000\delta>0$. Then
under the conditions of Lemma \ref{lemma1}, the following 
estimate holds
\begin{equation}\label{1.15}
\|u(t)-u_n(t)\|\leq \frac{1}{\sqrt{t}}\cdot \epsilon, \qquad \delta\leq t\leq T,
\end{equation}
if the number of exponentials $n$ satisfies the following estimate:
\begin{equation}\label{1.20}
n=O\left(\left(\log\left( \frac{1}{\epsilon}\right)
+\log \log \left(\frac{T}{\delta}\right)\right)
\log \left(\frac{T}{\delta}\right)\right).
\end{equation}
\end{corollary}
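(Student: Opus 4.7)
The plan is to specialize the estimate \eqref{1.7} of Lemma \ref{lemma1} to the parameter choices of Remark \ref{rem1} (so that $\phi(\alpha,\beta)$ becomes an explicit constant) and then choose $n$ large enough that the right-hand side is $\epsilon/\sqrt{t}$. Concretely, writing
\[
E(n) \;=\; \phi(\alpha,\beta)\cdot L\!\left(\lambda\delta\sin(\alpha-\beta)/2\right)\cdot e^{-2\pi\theta\beta n/a(\theta)},
\]
the goal is to locate the smallest $n$ with $E(n)\le \epsilon$; the claim \eqref{1.20} will follow from asymptotic estimates of $a(\theta)$ and $L$.

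First I would control $a(\theta)$. For $\theta$ fixed in the interval $[0.85,0.95]$ suggested in Remark \ref{rem1} and $\alpha=0.8$, the argument of the arccosh in \eqref{1.6} is at least of order $T/\delta\geq 1000$, so the standard asymptotic $\operatorname{arccosh}(x)=\log(2x)+O(1/x^2)$ yields
\[
a(\theta)=\log\!\left(\frac{T}{\delta}\right)+O(1),
\]
uniformly in the admissible range of $\theta$. In particular $2\pi\theta\beta/a(\theta)$ is of order $1/\log(T/\delta)$, so the exponential factor in $E(n)$ decays like $\exp(-c\, n/\log(T/\delta))$ with an explicit $c>0$.

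Next I would bound the $L$-factor. Using \eqref{1.5} and the estimate for $a(\theta)$, one has
\[
\lambda\delta\;=\;\frac{2\pi\beta n(1-\theta)\delta}{T\,a(\theta)}
\;=\;O\!\left(\frac{n\,\delta}{T\log(T/\delta)}\right),
\]
which is small for any $n$ that is only polylogarithmic in $T/\delta$. Since $L(x)=1+|\ln(1-e^{-x})|$ behaves like $1+|\ln x|$ as $x\downarrow 0$, this gives
\[
L\!\left(\lambda\delta\sin(\alpha-\beta)/2\right)=O\!\left(\log\!\tfrac{T}{\delta}+\log\log\!\tfrac{T}{\delta}+\log\tfrac{1}{n}\right)=O\!\left(\log\!\tfrac{T}{\delta}\right),
\]
provided $n\ge 1$. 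Substituting the two estimates into $E(n)\le\epsilon$ and taking logarithms yields a linear-in-$n$ inequality whose solution is
\[
n\;=\;O\!\left(\log\!\tfrac{T}{\delta}\cdot\left(\log\!\tfrac{1}{\epsilon}+\log\log\!\tfrac{T}{\delta}\right)\right),
\]
which is exactly \eqref{1.20}.

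The only mildly delicate point is the apparent circularity in bounding $L$: the argument of $L$ depends on $n$, which is itself what we are solving for. However, because $L$ depends only logarithmically on that argument while the exponential factor decays geometrically in $n/\log(T/\delta)$, one can either first fix a rough upper bound $n\le C\log^2(T/\delta)/\epsilon$ to bound $L$, or simply note that any $n$ satisfying the asserted estimate keeps $\lambda\delta$ bounded above by a constant, so $L=O(\log(T/\delta))$ without further iteration. I would also remark that the hypothesis $T/\delta\ge 1000$ is used only to make $a(\theta)$ large enough that the $O(1)$ error in the arccosh expansion is absorbed into the asymptotic constant; smaller ratios could be handled by a case split but are not needed for the corollary.
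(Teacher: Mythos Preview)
Your proposal is correct and follows essentially the same route as the paper: fix $\alpha,\beta,\theta$ as in Remark~\ref{rem1}, use the arccosh asymptotic to get $a(\theta)=O(\log(T/\delta))$, bound the $L$-factor by $O(\log(T/\delta))$ via $L(x)\sim|\ln x|$ for small $x$, and then read off \eqref{1.20} from the resulting inequality. Your treatment is in fact slightly more explicit than the paper's, in particular in flagging and resolving the mild circularity in the $L$-bound (the paper simply invokes ``the assumption \eqref{1.20}'' at that point).
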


\begin{proof}
We choose the parameters $\alpha$, $\beta$, and $\theta$ as in Remark \ref{rem1}.
The factor $\phi(\alpha,\beta)$ in \eqref{1.7} is just a fixed constant independent of $n$, 
$T$, and $\delta$. For $T\geq 1000\delta>0$, 
the parameter $a(\theta)$ defined 
in \eqref{1.6} satisfies the estimate 
\begin{equation}\label{1.21}
a(\theta)= O(\log \frac{T}{\delta}).
\end{equation}
Moreover, the function $L$ defined in \eqref{1.9} is decreasing for $x>0$, $L(x)\approx |\ln x|$ as $x\rightarrow 0^+$, 
and $L(x)\rightarrow 1$ as $x\rightarrow +\infty$. 
Combining this observation with the assumption \eqref{1.20}, 
we observe that the factor $L(\lambda \delta \sin(\alpha-\beta))$
in \eqref{1.7} satifies the estimate
\begin{equation}\label{1.22}
L(\lambda \delta \sin(\alpha-\beta)) \leq C \log\left(\frac{T}{\delta}\right).
\end{equation}
Substituting \eqref{1.21} and \eqref{1.22} into \eqref{1.7}, we obtain
\begin{equation}\label{1.23}
\|u(t)-u_n(t)\|\leq \frac{C_1}{\sqrt{t}} \cdot \log\left(\frac{T}{\delta}\right)
\cdot e^{-C_2n/\log\left(\frac{T}{\delta}\right)}.
\end{equation}
It is then easy to see that \eqref{1.15} follows if $n$ satisfies \eqref{1.20}.
\end{proof}

\begin{remark}
For most practical cases, the $\log \log \left(\frac{T}{\delta}\right)$ factor is much smaller
than the $\log\left( \frac{1}{\epsilon}\right)$ factor. Thus for a fixed precision $\epsilon$, 
we will simply say that $n=O\left(\log \left(\frac{T}{\delta}\right)\right)$.
\end{remark}

\begin{remark}
The hyperbolic contour is chosen in such a way that the horizontal 
strip $D_{\beta}=\{z\in \mathbb{C}: |\text{Im} z|\leq \beta\}$ is transformed
into a region bounded by the left branches of two hyperbolas defined
as in \eqref{1.2}, but with $x$ replaced by $x\pm i\beta$. The reason 
that such contour is chosen is the (well-known) fact that the trapezoidal
rule converges exponentially fast for functions holomorphic on a horizontal
strip containing the real axis (see, for example, \cite{stenger1,stenger2}).
We have actually used the midpoint rule to eliminate the occurence of
a node directly on the real axis. 
There is almost no difference in terms of accuracy, but this
allows us to assume that all nodes lie in the upper
half plane in actual computation.
\end{remark}

\begin{remark}
It is likely that other contours would yield similar results.
Trefethen {\em et al.} \cite{trefethen} have analyzed this issue 
with great care and
presented a detailed comparison of various contours 
(hyperbolic, parabolic, and Talbot contours \cite{talbot})
for inverting sectorial Laplace transforms, though they are mainly concerned with 
the efficiency of various contours for a fixed time $t$. 
\end{remark}

We will also need the following lemma, adapted from Theorem 5 in \cite{beylkin3}, which is 
concerned with the efficient sum-of-exponentials approximation for the power function 
$\frac{1}{t^\beta}$.

\begin{lemma}\label{lemma2}
\emph{[Adapted from Theorem 5 in \cite{beylkin3}.]}
For any $\beta\geq \frac{1}{2}$, $1/e\geq \epsilon >0$, and $T>3\delta>0$, 
there exist a positive integer 
\begin{equation}\label{1.17}
N=O\left( \log\left(\frac{1}{\epsilon}\right)\cdot \left(\log\frac{T}{\delta}+\log\frac{1}{\epsilon}\right)\right),
\end{equation}
and positive real numbers $\tilde{w}_i$, $\lambda_i$ ($i=1,\cdots,N$) such that
\begin{equation}\label{1.16}
\left|\frac{1}{t^{\beta}}-\sum_{i=1}^N \tilde{w}_i e^{-\lambda_i t}\right|\leq \frac{1}{t^{\beta}}\cdot \epsilon, \qquad \delta\leq t\leq T.
\end{equation}
Furthermore, for fixed accuracy $\epsilon$, $N=O\left(\log\frac{T}{\delta}\right)$.
\end{lemma}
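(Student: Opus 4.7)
The plan is to start from the classical integral representation
\[
\frac{1}{t^{\beta}}=\frac{1}{\Gamma(\beta)}\int_{0}^{\infty}s^{\beta-1}e^{-st}\,ds,
\]
which already expresses $1/t^{\beta}$ as a continuous superposition of decaying exponentials. After the substitution $s=e^{u}$ this becomes
\[
\frac{1}{t^{\beta}}=\frac{1}{\Gamma(\beta)}\int_{-\infty}^{\infty}e^{\beta u-e^{u}t}\,du,
\]
whose integrand is entire in $u$, with double-exponential decay as $u\to+\infty$ (from $-e^{u}t$) and simple exponential decay as $u\to-\infty$ (from $e^{\beta u}$). The sum-of-exponentials approximation is obtained by applying a truncated midpoint (or trapezoidal) rule to this integral on a finite window $[u_{L},u_{R}]$ with step $h$; the nodes yield $\lambda_{i}=e^{u_{k}}>0$ and the weights $\tilde w_{i}=he^{\beta u_{k}}/\Gamma(\beta)>0$, so the positivity conditions are automatic.

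Next I would split the total error into a truncation part and a trapezoidal part and force each to be at most $\varepsilon/t^{\beta}$ uniformly for $t\in[\delta,T]$. For the upper tail at $u_{R}$, a change of variables back to $s$ gives a bound by the incomplete gamma integral $\delta^{-\beta}\Gamma(\beta,e^{u_{R}}\delta)$; making this $\le \Gamma(\beta)\varepsilon/T^{\beta}$ forces $e^{u_{R}}\delta\gtrsim\log(1/\varepsilon)+\beta\log(T/\delta)$, so
\[
u_{R}=\log(1/\delta)+O\!\bigl(\log\log(T/\delta)+\log\log(1/\varepsilon)\bigr).
\]
For the lower tail at $u_{L}$, the crude bound $\int_{-\infty}^{u_{L}}e^{\beta u}\,du=e^{\beta u_{L}}/\beta$ combined with the worst-case factor $\varepsilon/T^{\beta}$ gives
\[
u_{L}=-\log T-\tfrac{1}{\beta}\log(1/\varepsilon)+O(1).
\]
Therefore the length of the retained window is
\[
u_{R}-u_{L}=O\!\bigl(\log(T/\delta)+\log(1/\varepsilon)\bigr).
\]

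For the quadrature error, the integrand is entire and, on any horizontal strip $|\operatorname{Im}u|<d$ with $d<\pi/2$, the real part of $-e^{u}t$ stays negative, so Stenger-type estimates (\cite{stenger1,stenger2}) give a trapezoidal error bounded by $\|f(\cdot,t)\|_{H(D_{d})}e^{-2\pi d/h}$. The crucial observation is that when the error is normalized by $1/t^{\beta}$, the $t$-dependence cancels: the strip Hardy norm satisfies $\|f(\cdot,t)\|_{H(D_{d})}\sim(t\cos d)^{-\beta}\Gamma(\beta)$, so the \emph{relative} trapezoidal error is bounded by $(\cos d)^{-\beta}e^{-2\pi d/h}$, which is independent of $t$. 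Forcing this to be $\le\varepsilon$ requires only $1/h=O(\log(1/\varepsilon))$, with no factor of $\log(T/\delta)$. Multiplying the two ingredients,
\[
N=\frac{u_{R}-u_{L}}{h}=O\!\left(\log\frac{1}{\varepsilon}\cdot\Bigl(\log\frac{T}{\delta}+\log\frac{1}{\varepsilon}\Bigr)\right),
\]
which is exactly \eqref{1.17}, and for fixed $\varepsilon$ this collapses to $O(\log(T/\delta))$.

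The main obstacle is precisely this last point: a naive bookkeeping in which one demands the \emph{absolute} trapezoidal error to be $\le\varepsilon/T^{\beta}$ produces an extra $\log(T/\delta)$ factor in $1/h$ and hence the wrong complexity $O((\log(T/\delta)+\log(1/\varepsilon))^{2})$. The scale-invariance of the integrand under $u\mapsto u+\log(1/t)$ is what lets the Hardy-norm factor cancel against $t^{\beta}$, so that the step size $h$ can be chosen depending only on $\varepsilon$. This observation is the heart of Theorem 5 of \cite{beylkin3}, which we simply invoke here in the adapted form stated.
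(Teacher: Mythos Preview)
The paper does not supply its own proof of Lemma~\ref{lemma2}; it simply states the result as adapted from Theorem~5 of \cite{beylkin3} and moves on. Your proposal is a correct and fairly detailed reconstruction of exactly that argument: the Gamma-integral representation, the change of variables $s=e^{u}$, truncation of the $u$-window to length $O(\log(T/\delta)+\log(1/\epsilon))$, and the scale-invariance observation that lets the trapezoidal step $h$ depend only on $\epsilon$. This is precisely the Beylkin--Monz\'on construction, so you are not taking a different route---you are filling in what the paper omits by citation. Two small remarks: (i) your upper-tail bound is slightly conservative (bounding the absolute rather than relative tail), but this only contributes a harmless $\log\log$ correction to $u_R$; (ii) the clean separation of truncation error and infinite-line trapezoidal error requires noting that the discarded trapezoidal nodes are themselves dominated by the tail-integral bounds, which is routine but worth a sentence if you write this out in full.
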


\begin{remark}
It is pointed out in the caption of Table 1 of \cite{beylkin3} that the 
dependence of the number of terms on accuracy appears to be almost 
linear in $\log\frac{1}{\epsilon}$ rather than
$O\left((\log \frac{1}{\epsilon})^2\right)$, as stated in \eqref{1.17}.
Indeed, Remark 8 in \cite{jiang1} states that the number of exponentials needed to approximate $\frac{1}{t^{1/2}}$ 
for a given relative accuracy $\epsilon$ is $O\left(\log\frac{1}{\epsilon}\left(\log\log\frac{1}{\epsilon}+\log\frac{T}{\delta}\right)\right)$. 
It is straightforward, but tedious, to extend the proof 
of \cite{jiang1} to show that the number of exponentials to achieve a 
relative accuracy of $\epsilon$ for $\frac{1}{t^{n/2}}$ when $n=2,3,4$
is of the same order. This is sufficient for the heat kernel in one, two, and 
three dimensions. 
While this leads to a formal improvement in estimating
the required number of exponentials, the numerical
results from the method of \cite{beylkin3} are actually more efficient than 
those obtained via the explicit construction of \cite{jiang1}. 
Thus, we have left the statement of Lemma \ref{lemma2} as is.
\end{remark}

\begin{remark}
In \cite{braess1}, the error of the approximation of $1/x$ by exponential sums is 
studied in detailed on both finite and infinite intervals.
In \cite{braess2}, Braess and Hackbusch
extend their analysis to the general power function $1/x^\alpha$, $\alpha>0$,
obtaining sharp error estimates for the absolute error.
\end{remark}

\section{Sum-of-exponentials approximation of 
the heat kernel} \label{sec2}

In this section, we first develop 
a separated sum-of-exponentials approximation
for the one-dimensional heat kernel. We then extend the approximation
to arbitrary space dimensions, and to directional derivatives of the heat
kernel, such as the kernel of the double layer potential $D(\sigma)$.

The one-dimensional result is summarized by the following theorem.

\begin{theorem}\label{1dkernel}
Let 
$G(x,t)=\frac{1}{\sqrt{4\pi t}}e^{-\frac{|x|^2}{4t}}$ 
denote the one-dimensional heat kernel and let its 
Laplace transform be denoted by 
$\hat{G}(x,s)=\int_0^{\infty}e^{-st}G(x,t)dt$.
Then, for $s>0$ and $x\in \mathbb{R}$, 
\begin{equation}\label{2.1}
\hat{G}(x,s)=\frac{1}{2\sqrt{s}}e^{-\sqrt{s}|x|}.
\end{equation}
Furthermore, let $0.1>\epsilon>0$ be fixed accuracy and $T\geq 1000\delta>0$ . Then
there exists a sum-of-exponentials approximation
\begin{equation}\label{2.2}
G_A(x,t)=\sum_{k=-n}^n w_k e^{s_k t}e^{-\sqrt{s_k} |x|}
\end{equation}
such that 
\begin{equation}\label{2.3}
|G(x,t)-G_A(x,t)|< \frac{1}{\sqrt{t}}\cdot \epsilon
\end{equation}
for $x\in \mathbb{R}$ and $t\in [\delta,T]$, 
with 
\begin{equation}\label{2.4}
n=\mathcal{O}\left(\log(\frac{T}{\delta}) \left(\log(\frac{1}{\epsilon})
+\log\log\left(\frac{T}{\delta}\right)\right)\right).
\end{equation}
\end{theorem}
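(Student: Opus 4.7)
The plan is to realize $G(x,t)$ as an inverse Laplace transform in $t$ (treating $x$ as a parameter), and then hit that contour integral with the trapezoidal/midpoint discretization controlled by Lemma \ref{lemma1} and Corollary \ref{cor1}. First, I would verify the Laplace transform identity \eqref{2.1} by a direct calculation: $\int_0^\infty e^{-st}\frac{1}{\sqrt{4\pi t}}e^{-|x|^2/(4t)}\,dt=\frac{1}{2\sqrt{s}}e^{-\sqrt{s}|x|}$, using the substitution $u=\sqrt{t}$ and a standard Gaussian completion-of-the-square. This gives the inversion formula
\[
G(x,t)=\frac{1}{2\pi i}\int_{\Gamma}e^{zt}\,\frac{1}{2\sqrt{z}}\,e^{-\sqrt{z}|x|}\,dz
\]
for any admissible contour $\Gamma$ in $W=\mathbb{C}\setminus(-\infty,0]$, with the principal branch of $\sqrt{z}$ (so $\operatorname{Re}\sqrt{z}\ge 0$ on $W$).

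Next I would apply Lemma \ref{lemma1} with $U(z):=\frac{1}{2\sqrt{z}}\,e^{-\sqrt{z}|x|}$, viewing $x\in\mathbb{R}$ as a fixed but arbitrary parameter. The crucial observation, and the whole reason the scheme is uniform in $x$, is that $U$ is holomorphic on $W$ and satisfies
\[
|U(z)|\;=\;\frac{1}{2|z|^{1/2}}\,e^{-|x|\operatorname{Re}\sqrt{z}}\;\le\;\frac{1}{2|z|^{1/2}},
\]
since $|x|\ge 0$ and $\operatorname{Re}\sqrt{z}\ge 0$. This is exactly the hypothesis \eqref{1.1}, and the constant in the bound does not depend on $x$. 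Hence all quantities in \eqref{1.7} are $x$-independent, so the error estimate \eqref{1.7}, and therefore Corollary \ref{cor1}, hold uniformly for every $x\in\mathbb{R}$.

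Plugging $U$ into the discretization \eqref{1.3} and setting $s_k:=\lambda T(kh)$, with
\[
w_k\;:=\;-\frac{h\lambda}{2\pi i}\cdot\frac{T'(kh)}{2\sqrt{\lambda T(kh)}},
\]
yields exactly the separated form \eqref{2.2}: $G_A(x,t)=\sum_{k=-n}^{n}w_k\,e^{s_k t}\,e^{-\sqrt{s_k}|x|}$, with coefficients independent of $x$. Corollary \ref{cor1}, applied with the parameter choices of Remark \ref{rem1} and the assumption $T\ge 1000\delta$, then gives $|G(x,t)-G_A(x,t)|\le \epsilon/\sqrt{t}$ for all $t\in[\delta,T]$ as soon as $n$ obeys the bound \eqref{1.20}, which is precisely \eqref{2.4}. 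To match the stated form involving real $s_k$-like quantities, I would finally pair each node $kh$ with its conjugate $-kh$ (the midpoint rule places no node on the real axis), so the summation can be reorganized into conjugate pairs; since $G(x,t)$ is real, this confirms that the approximation is real-valued without loss.

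The only step that requires any thought at all is the uniform-in-$x$ bound on $U(z)$; everything else is a direct invocation of Lemma \ref{lemma1} and Corollary \ref{cor1}. Because $\operatorname{Re}\sqrt{z}\ge 0$ on the cut plane makes the exponential factor $e^{-\sqrt{z}|x|}$ a contraction rather than a growth factor, that potential obstacle disappears, and the full strength of Corollary \ref{cor1} transfers verbatim to the heat kernel, giving both the error estimate \eqref{2.3} and the node count \eqref{2.4}.
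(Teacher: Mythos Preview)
Your proposal is correct and matches the paper's proof essentially step for step: verify the Laplace transform \eqref{2.1}, observe that $\hat G(x,z)=\tfrac{1}{2\sqrt{z}}e^{-\sqrt{z}|x|}$ is holomorphic on the cut plane and satisfies the uniform bound $|\hat G(x,z)|\le \tfrac{1}{2|z|^{1/2}}$ (because $\operatorname{Re}\sqrt{z}\ge 0$), and then invoke Lemma~\ref{lemma1} and Corollary~\ref{cor1} to obtain the nodes $s_k=\lambda T(kh)$, weights $w_k$, and the error/count estimates. Your explicit justification of the $x$-uniformity of the bound is in fact more detailed than the paper's, which simply asserts \eqref{2.10}; otherwise the arguments coincide.
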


\begin{proof}
The formula (\ref{2.1}) is well-known and can be derived from standard
Laplace transform tables.
The natural extension of $\hat{G}(x,s)$ to the complex $s$-plane 
has an obvious branch point at $s=0$, and a branch cut along the negative 
real axis assuming the restriction $-\pi<s\leq \pi$. 
Thus, $\hat{G}(x,s)$ is holomorphic on $W=\mathbb{C}\setminus 
(-\infty,0]$ and satisfies the estimate 
\begin{equation}\label{2.10}
\|\hat{G}(x,s)\| \leq \frac{1}{2|s|^{1/2}}
\end{equation}
for $s\in W$ and all $x\in \mathbb{R}$.
As a result,
$G(x,t)$ can be represented by the inverse Laplace transform:
\begin{equation}\label{2.11}
\begin{aligned}
G(x,t)&=\frac{1}{2\pi i}\int_{\Gamma} e^{st}\hat{G}(x,s)ds\\
&=\frac{1}{4\pi i}\int_{\Gamma} \frac{1}{\sqrt{s}}e^{st-\sqrt{s}|x|}ds,\\
\end{aligned}
\end{equation}
where $\Gamma$ is a simple contour lying in $W=\mathbb{C}\setminus 
(-\infty,0]$, and parametrizable by a regular mapping 
$S:(-\infty,+\infty)\rightarrow \mathbb{C}$ such that
\[ \lim_{x\rightarrow \pm \infty} \text{Im} S(x)=\pm \infty \qquad \text{and} \qquad
\lim_{x\rightarrow \pm \infty} \frac{\text{Re} \left( S(x) \right)}{|x|}<0.\] 
The last condition implies that
\[\text{Re}(z) \leq -b|z|, \qquad \text{as}\ z\rightarrow \infty, z\in \Gamma,\]
for some $b>0$. This forces the integral \eqref{2.11} to 
be absolutely convergent. It is easy to see that the integral is 
independent of the choice of $\Gamma$.

The main result \eqref{2.2} is now a direct
consequence of Lemma 1 and Corollary 1 in Section 2 since 
$\hat{G}$ satisfies the condition \eqref{2.10}. In particular, we have
\begin{equation}\label{2.16}
G_A(x,t)=\sum_{k=-n}^n w_k e^{s_k t}e^{-\sqrt{s_k} |x|},
\end{equation}
where
\begin{align}
\label{2.17}
s_k &=\lambda(1-\sin(\alpha+ikh)), \\
\label{2.18}
w_k &=-\frac{h}{4\pi i \sqrt{s_k}}s'_k,
\end{align}
with $s'_k=-\lambda i\cos(\alpha+ikh)$, and the parameters $h$, $\lambda$ specified in \eqref{1.4}
and \eqref{1.5}, respectively.
\end{proof}

In Table \ref{tab2.1}, we list the number of exponentials needed to approximate
the 1D heat kernel for $x\in \mathbb{R}$ over three different
 time intervals $[\delta,T]$: $I_1=[10^{-3},1]$,
$I_2=[10^{-3},10^3]$, $I_3=[10^{-5},10^4]$, which correspond roughly to $10^3$,
$10^6$, and $10^9$ time steps, respectively. The first column lists the 
maximum error of the approximation computed 
over a $50\times 1000$ 
grid $(x_j,t_k)$ where
$x_0=0$, $x_j=2^{-16+j}$ for $j=1,\cdots,49$, and the $t_k$ 
are $1000$ samples on $[\delta,T]$ chosen to be equispaced
on a logarithmic scale. 
The node locations are plotted in Fig. \ref{fig2.1}.

\begin{table}[ht]
\begin{center}
{\bf \caption{Number of exponentials needed to approximate the 1D heat kernel
for $x\in \mathbb{R}$ over different time intervals: 
$I_1=[10^{-3},1]$,
$I_2=[10^{-3},10^3]$, $I_3=[10^{-5},10^4]$. \label{tab2.1}
}}
\begin{tabular*}{1.0\textwidth}{@{\extracolsep{\fill}}  l  c  c  r  }
\hline
$\epsilon$ & $I_1$ & $I_2$ & $I_3$ \\
\hline
$10^{-3}$ & 15 & 23 & 32\\
$10^{-6}$ & 31 & 50 & 68\\
$10^{-9}$ & 47 & 77 & 105\\
\hline
\end{tabular*}
\end{center}
\end{table}

\begin{figure}[!ht]
\centerline{\includegraphics[trim=20 240 10 240, clip, height=2.5in]
{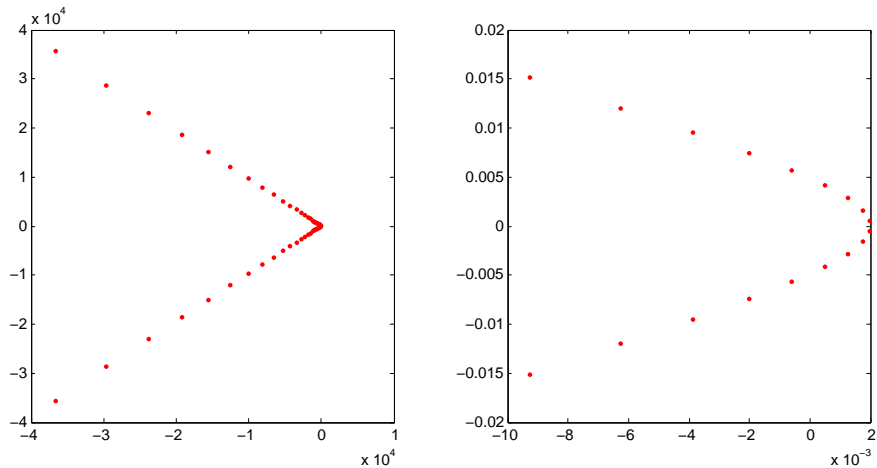}} 
\caption{\emph{The location of the exponential nodes $s_k$ ($k=\pm 1,\cdots,\pm n$ 
for $n=77$) used
in \eqref{2.16} to approximate the 1D heat kernel 
for $t\in [10^{-3},10^3]$ with $9$-digit accuracy. The left figure shows
all nodes, while the right figure is a close-up of the nodes near the origin. All nodes
lie on the left branch of the hyperbola specified in \eqref{1.2}.} 
} \label{fig2.1}
\end{figure}

\begin{remark}
To obtain Table 1, we set $\alpha=0.8$, $\beta=0.7$, $\theta=0.9$ for $I_1$ and $\theta=0.95$ 
for $I_2$ and $I_3$. 
\end{remark}

\begin{remark}
The coefficients $w_i$ are not positive (but
complex) and thus stability is an issue. It is difficult to bound the expression
\[\frac{\sum|w_i \exp(s_i t)|}{|\sum w_i \exp(s_i t)|}\]
analytically for all time. However, we have checked the value of
the expression numerically for all cases presented in Table 1, and
found that it is roughly $1.08$. This suggests that 
the sum-of-exponentials approximation is, indeed, well-conditioned. 
\end{remark}

\begin{remark}
If we make a further change of variable $z=\sqrt{s}$ in \eqref{2.11}, we obtain
\begin{equation}\label{2.19}
G(x,t)=\frac{1}{2\pi i}\int_{\Gamma'} e^{z^2t-z|x|}dz,\\
\end{equation}
where $\Gamma'$ is any contour lying in the sector $\{\frac{\pi}{4}< |\arg z|
\leq \frac{\pi}{2}\}$ of the complex plane. In particular, if $\Gamma'$ is chosen
to be the imaginary axis, then we essentially recover the Fourier integral representation
of the heat kernel (see, for example, \cite{greengard2}).
\end{remark}

Finally, it is worth repeating that
the number of terms required in the sum-of-exponentials approximation
does not depend on the spatial extent of the problem. 

\subsection{Heat kernels in higher dimensions}

Suppose now that we are interested in heat flow in $\bbR^d$, for $d\geq 2$,
where the heat kernel is 
\[ G_d(x,t)=\frac{1}{(4\pi t)^{d/2}}e^{-\frac{|x|^2}{4t}}. \] 
The following theorem describes
an efficient sum-of-exponentials representation.

\begin{theorem}\label{ndkernel}
For any $0.1>\epsilon >0$ and $T\geq 1000\delta>0$,
the heat kernel $G_d(x,t)$ admits the 
following approximation:
\begin{equation}\label{3.1}
\tilde{G}_d(x,t)=\sum_{j=1}^{N_2} \tilde{w}_j e^{-\lambda_j t}
\sum_{k=-N_1}^{N_1} w_k e^{s_k t}e^{-\sqrt{s_k} |x|}
\end{equation}
such that 
\begin{equation}\label{3.2}
|G_d(x,t)-\tilde{G}_d(x,t)|<\frac{1}{t^{d/2}}\cdot \epsilon
\end{equation}
for any $x\in \mathbb{R}^d$, $t\in [\delta,T]$ 
with $N_1$ specified in \eqref{2.4} and $N_2$ specified in
\eqref{1.17}.
\end{theorem}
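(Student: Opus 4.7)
The plan is to exploit the product structure $G_d(x,t) = \frac{1}{(4\pi t)^{(d-1)/2}} \cdot G_1(|x|,t)$, where on the right $G_1$ denotes the one-dimensional heat kernel of Theorem~\ref{1dkernel} evaluated at the scalar $|x| = (\sum_{i=1}^d x_i^2)^{1/2}$. Each factor will be approximated separately and the two approximations multiplied, producing the separated form (\ref{3.1}).

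First I would apply Theorem~\ref{1dkernel} verbatim, simply reading $|x|\in[0,\infty)$ as the one-dimensional spatial variable. This produces the inner sum $\tilde g(|x|,t) = \sum_{k=-N_1}^{N_1} w_k e^{s_k t} e^{-\sqrt{s_k}|x|}$ with the same nodes/weights as in Theorem~\ref{1dkernel}, $N_1$ of the asserted order, and absolute error bounded by $\epsilon/\sqrt{t}$ on $[\delta,T]$, uniformly in $|x|$. Next, since $d\geq 2$ yields $\beta := (d-1)/2 \geq 1/2$, Lemma~\ref{lemma2} applies to $1/t^{\beta}$; absorbing the constant $(4\pi)^{-(d-1)/2}$ into the weights supplies an approximation $\tilde f(t) = \sum_{j=1}^{N_2}\tilde w_j e^{-\lambda_j t}$ to $f(t) := (4\pi t)^{-(d-1)/2}$ with $N_2$ of the order stated in (\ref{1.17}) and $|f-\tilde f|\leq \epsilon\, f$ on $[\delta,T]$.

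Setting $\tilde G_d := \tilde f(t)\,\tilde g(|x|,t)$ then has exactly the form (\ref{3.1}). For the error, I would use the standard product decomposition
\begin{equation*}
G_d - \tilde G_d = (f-\tilde f)\, G_1(|x|,t) + \tilde f\cdot\bigl(G_1(|x|,t) - \tilde g(|x|,t)\bigr),
\end{equation*}
together with the trivial bound $G_1(|x|,t)\leq 1/\sqrt{4\pi t}$, the bound $|\tilde f|\leq (1+\epsilon) f$ inherited from Lemma~\ref{lemma2}, and the identity $f(t)/\sqrt{4\pi t} = 1/(4\pi t)^{d/2}$. This gives a bound of the form $C(d)\,\epsilon/t^{d/2}$ on $[\delta,T]$ for a constant $C(d)$ depending only on $d$. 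Rescaling the target accuracy fed into Theorem~\ref{1dkernel} and Lemma~\ref{lemma2} by $1/C(d)$ then yields exactly (\ref{3.2}), and since $C(d)$ is $\epsilon$-independent this rescaling affects $N_1$ and $N_2$ only in their constants, preserving the stated asymptotic orders.

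The argument is essentially an assembly of the two building blocks already in hand, so there is no deep obstacle. The only items requiring a little care are the verification that Lemma~\ref{lemma2}'s hypothesis $\beta\geq 1/2$ is met (which is why the theorem is stated only for $d\geq 2$) and the bookkeeping of multiplicative constants in the product-error estimate, so that the dimension-dependent factor $C(d)$ can be absorbed by the harmless rescaling of $\epsilon$.
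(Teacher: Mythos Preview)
Your proposal is correct and follows essentially the same argument as the paper: factor $G_d$ as the one-dimensional kernel times $(4\pi t)^{-(d-1)/2}$, approximate each factor via Theorem~\ref{1dkernel} and Lemma~\ref{lemma2} respectively, and combine via the identical product-error splitting (the paper writes it as $|G_1F-G_1S_2|+|G_1S_2-S_1S_2|$, which is your decomposition term-for-term). Your explicit check that $\beta=(d-1)/2\ge 1/2$ and your rescaling of $\epsilon$ to absorb the constant are slightly more careful than the paper, which simply arrives at $3\epsilon/t^{d/2}$ and declares the result.
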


\begin{proof}
We first rewrite
 the $d$ dimensional heat kernel as a product of two functions:
\begin{equation}\label{3.3}
G_d(x,t)=G_1(x,t)\cdot F(t),
\end{equation}
where
\begin{equation}\label{3.4}
G_1(x,t)=\frac{1}{(4\pi t)^{1/2}}e^{-\frac{|x|^2}{4t}}, \qquad
F(t)=\frac{1}{(4\pi t)^{(d-1)/2}}.
\end{equation}
Similarly, we rewrite $\tilde{G}_d$ as follows:
\begin{equation}\label{3.5}
\tilde{G}_d(x,t)=S_1(x,t)\cdot S_2(t),
\end{equation}
where
\begin{equation}\label{3.6}
S_1(x,t)=\sum_{k=-N_1}^{N_1} w_k e^{s_k t}e^{-\sqrt{s_k} |x|}, \qquad
S_2(t)=\sum_{j=1}^{N_2} \tilde{w}_j e^{-\lambda_j t}.
\end{equation}
Using the triangle inequality, we have
\begin{equation}\label{3.8}
\begin{aligned}
|G_d(x,t)-\tilde{G}_d(x,t)|&\leq |G_1\cdot F-G_1\cdot S_2|+|G_1\cdot S_2-S_1\cdot S_2|\\
&\leq |G_1|\cdot |F-S_2|+(1+\epsilon)|F|\cdot |G_1-S_1|\\
&\leq \frac{3}{ t^{d/2}}\cdot \epsilon,
\end{aligned}
\end{equation}
and the result follows.
\end{proof}

\subsection{The double layer heat potential}

Since we often rely on the double layer potential in 
integral equation methods, it is worth writing down a sum-of-exponentials
approximation for this case as well.
We denote the kernel of the double layer heat potential by
\[ D(x,y; t)=\frac{\partial G(x-y,t)}{\partial n_y}=
\frac{(x-y)\cdot n_y}{(4\pi t)^{d/2} 2t}e^{-\frac{|x-y|^2}{4t}}.
\] 

\begin{theorem}\label{doublekernel}
Let $0.1>\epsilon > 0$ be fixed accuracy, $R>1$ and $T\geq 1000\delta>0$.
Then there exists a sum-of-exponentials approximation
\begin{equation}\label{4.1}
D_A(x,y;t)=\sum_{j=1}^{N_2} \tilde{w}_j e^{-\lambda_j t}\sum_{k=-N_1}^{N_1} w_k e^{s_k t}e^{-\sqrt{s_k} |x-y|}(x-y)\cdot n_y
\end{equation}
such that 
\begin{equation}\label{4.2}
|D(x,y;t)-D_A(x,y;t)|<\frac{1}{t^{(d+2)/2}} \cdot \epsilon
\end{equation}
for $|x-y|\leq R$ and $t\in [\delta,T]$.
Here $N_1$ and $N_2$ are as follows:
\begin{equation}\label{4.3}
N_1=\mathcal{O}\left(\log(\frac{T}{\delta})\left(\log(\frac{1}{\epsilon})+
\log\log\left(\frac{T}{\delta}\right)+
\log R\right)\right)
\end{equation}
and 
\begin{equation}\label{4.4}
N_2=O\left( (\log\left(\frac{1}{\epsilon}\right)+\log R)\cdot
  \left(\log\frac{T}{\delta}+\log\frac{1}{\epsilon}+\log
    R\right)\right).
\end{equation}

\end{theorem}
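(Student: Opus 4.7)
The strategy is to follow Theorem \ref{ndkernel} almost verbatim, modifying it to absorb (i) the extra factor $1/(2t)$ that distinguishes the double-layer kernel from $G_d(x-y,t)$ and (ii) the prefactor $(x-y)\cdot n_y$, whose magnitude can be as large as $R$. Concretely, factor
\[
D(x,y;t) = \bigl((x-y)\cdot n_y\bigr) \cdot G_1(x-y,t) \cdot \tilde{F}(t),
\qquad \tilde{F}(t) = \frac{1}{2(4\pi)^{(d-1)/2}\, t^{(d+1)/2}},
\]
and observe that the proposed $D_A$ in \eqref{4.1} factors the same way as $\bigl((x-y)\cdot n_y\bigr)\, S_1(x-y,t)\, S_2(t)$, where $S_1$ is produced by Theorem \ref{1dkernel} and $S_2(t)=\sum_j \tilde{w}_j e^{-\lambda_j t}$ is to approximate $\tilde{F}(t)$. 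Since $(d+1)/2 \geq 1/2$ for every $d\geq 1$, Lemma \ref{lemma2} supplies such an $S_2$ to any prescribed relative accuracy.

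Introduce an auxiliary tolerance $\epsilon'>0$. Apply Theorem \ref{1dkernel} at accuracy $\epsilon'$ to obtain $|G_1 - S_1| \leq \epsilon'/\sqrt{t}$ for $x\in\bbR$, and apply Lemma \ref{lemma2} with $\beta = (d+1)/2$ at relative accuracy $\epsilon'$ to obtain $|\tilde{F} - S_2| \leq \epsilon'\,\tilde{F}(t)$ on $[\delta,T]$. A triangle-inequality split identical to \eqref{3.8}, now applied to the product $G_1\cdot\tilde{F}$ instead of $G_1\cdot F$, together with the elementary bounds $|G_1(x,t)| \leq (4\pi t)^{-1/2}$ and $|S_2(t)| \leq (1+\epsilon')\tilde{F}(t) = O(t^{-(d+1)/2})$, yields
\[
\bigl|G_1(x-y,t)\,\tilde{F}(t) - S_1(x-y,t)\,S_2(t)\bigr| \leq \frac{C\,\epsilon'}{t^{(d+2)/2}}.
\]
Multiplying by $|(x-y)\cdot n_y| \leq |x-y| \leq R$ gives $|D - D_A| \leq C R\epsilon'/t^{(d+2)/2}$.

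Finally, set $\epsilon' = \epsilon/(CR)$; then \eqref{4.2} holds. Substituting this $\epsilon'$ in place of $\epsilon$ in the node-count estimates of Corollary \ref{cor1} and Lemma \ref{lemma2} replaces every $\log(1/\epsilon)$ by $\log(1/\epsilon) + \log R + O(1)$, which, after absorbing the additive constant into the $\mathcal{O}$-notation, reproduces \eqref{4.3} and \eqref{4.4}. The only genuine obstacle is the bookkeeping needed to carry the $R$ dependence cleanly through both the 1D approximation and the power-function approximation; once that is done, the rest is a mechanical adaptation of the proof of Theorem \ref{ndkernel}.
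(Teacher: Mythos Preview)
Your proposal is correct and follows essentially the same approach as the paper's proof: factor $D$ as $(x-y)\cdot n_y\cdot G_1\cdot \tilde F$ with $\tilde F(t)\propto t^{-(d+1)/2}$, invoke Theorem~\ref{1dkernel} and Lemma~\ref{lemma2} at the tightened tolerance $\epsilon/R$ (the paper writes $\hat\epsilon=\epsilon/R$), then combine via the same triangle-inequality split used in Theorem~\ref{ndkernel}. The only cosmetic difference is that the paper multiplies the $G_1$ approximation by $(x-y)\cdot n_y$ before the triangle-inequality step, whereas you multiply by it afterward; the resulting bounds and node counts are identical.
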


\begin{proof}
We first introduce $\hat{\epsilon}=\epsilon/R$. Then by Theorem 1,
there exist
$N_1=\mathcal{O}\left(\log(\frac{T}{\delta})\log(\frac{1}{\hat{\epsilon}})\right)
=\mathcal{O}\left(\log(\frac{T}{\delta})\left(\log(\frac{1}{\epsilon})+
\log\log\left(\frac{T}{\delta}\right)+
\log
  R\right)\right)$,
coefficients $w_k$, and nodes $s_k$ ($k=1,\cdots,N_1$) such that
\begin{equation}\label{4.5}
|G_1(x,t)-\sum_{k=-N_1}^{N_1} w_k e^{s_k t}e^{-\sqrt{s_k} |x|}
|<\frac{1}{\sqrt{t}}\cdot \hat{\epsilon},
\end{equation}
for $x\in \mathbb{R}$ and $t\in [\delta,T]$.
 
Changing $x$ to $x-y$ and multiplying both sides of \eqref{4.5} by
$(x-y)\cdot n_y$, we
obtain
\begin{equation}\label{4.6}
\begin{aligned}
&\left|\frac{(x-y)\cdot n_y
}{(4\pi t)^{1/2}
  }e^{-\frac{|x-y|^2}{4t}}-\sum_{k=-N_1}^{N_1} w_k e^{s_k
    t}e^{-\sqrt{s_k} |x-y|}\cdot ((x-y)\cdot n_y)\right|\\
&\leq \frac{|(x-y)\cdot n_y|}{(4\pi t)^{1/2}} \cdot \hat{\epsilon}\\
&\leq \frac{R}{(4\pi t)^{1/2}} \cdot \frac{\epsilon}{R}\\
&\leq \frac{1}{(4\pi t)^{1/2}} \cdot \epsilon\\
\end{aligned}
\end{equation}
for all $|x-y|\leq R$ and $t\in [\delta,T]$.

Similarly, by Lemma 2, we have
\begin{equation}\label{4.7}
\left|\frac{1}{t^{(d+1)/2}}-\sum_{i=1}^{N_2} \tilde{w}_i e^{-\lambda_i t}\right|\leq \frac{1}{t^{(d+1)/2}}\cdot \epsilon/R, \qquad \delta\leq t\leq T,
\end{equation}
where $N_2$ is given by \eqref{4.4}.

The result is then obtained by an argument almost identical to that in the proof
of Theorem \ref{ndkernel}. 
\end{proof}

\begin{remark}
As discussed in Remark 5, more involved analysis could replace the
order estimate 
$O((\log\frac{1}{\epsilon}+\log R)^2)$ 
for $N_2$ in \eqref{4.4} with $O(\log\frac{1}{\epsilon}+\log R)$.
\end{remark}

\begin{remark}
It is worth noting that Theorems 1--3 provide what are, in essence,
relative error estimates. Our numerical experiments also indicate that the
$\log R$ dependence in $N_1$ and $N_2$ and the restriction on 
$x-y$ are somewhat artificial since
$D(x,y;t)$ and $D_A(x,y;t)$ are exponentially small for large $(x-y)$.
\end{remark}

\section{Applications}  \label{sec:applications}

We return now to a consideration of the exterior Dirichlet problem
governed by the heat equation in eqs.
\eqref{heateq1} and \eqref{heatbc}. 
We let $\delta=(k-1)\Delta t$ in order to obtain
a $k$th order scheme and consider the calculation of $D_L$ and $D_H$
separately. To simplify the notation, we fix the relative accuracy
$\epsilon$, the computational domain size $R$, and
suppress the dependence of the complexity on $\epsilon$ and $R$ in the following discussion.
\subsection{Evaluation of the local part $D_L$} \label{sec:loc}

The discretization of the local part of the double layer potential $D_L$ 
is discussed in \cite{li2} for both stationary and moving boundaries. 
We limit our attention here to 
the case of a stationary boundary, $\Gamma(t) = \Gamma(0)$, for the sake
of simplicity.  The basic idea is to expand the density
$\sigma(y,t)$ on $[t-\delta,t]$ for each $y$ in the form
\[ \sigma(y,\tau) = \sigma_0(y) + (t-\tau) \sigma_1(y) + \dots +
\frac{ (t-\tau)^{k-1}}{(k-1)!} \sigma_{k-1}(y) + O((t-\tau)^k).
\]
The functions $\sigma_0(y),\dots, \sigma_{k-1}(y)$ are obtained
from the function values  $\sigma(y,t-j\Delta t)$ for $j=0,\cdots, k-1$.

Substituting the above expression into \eqref{eq:dnl} and changing the order of integration in time and space, 
we obtain
\begin{align}
\label{expintseriesdlp}
D_L(\sigma)(x,t,\delta) =  \bigg[
&\int_\Gamma D_{L,0}(x,y) \sigma_0(y) \, ds_{y} 
+ 
\int_\Gamma D_{L,1}(x,y) \sigma_1(y) \, ds_{y} 
 + \dots \\
 &+ 
\frac{1}{(k-1)!}
\int_\Gamma D_{L,k-1}(x,y) \sigma_{k-1}(y) \, ds_{y} \bigg]
 + O( (t-\tau)^{k+1/2} . \nonumber
 \end{align}
where 
$D_{L,k}(x,y)$ is given by 
\begin{equation}
\label{time_kernels}
D_{L,k}(x,y) = 
\int_{t-\delta}^t 
\frac{\partial}{\partial n_y}G_d(x-y,t-\tau)
(t-\tau)^{k-2} \, d\tau . 
\end{equation}
For an analgous treatment of unsteady Stokes potentials,
see \cite{jiang3}.

\begin{remark}
The above procedure provides a robust, high-order
marching scheme that is insensitive to the 
complexity of the geometry. Simpler time-marching schemes are
subject to {\em geometrically induced stiffness}, discussed at some length
in \cite{jiang3,li2}.
\end{remark}

The kernels $D_{L,k}$ can be computed in closed form in terms 
of the exponential integral function ${\rm Ei}(1,x)$ in two dimensions
and the error function ${\rm erf}(x)$ in three dimensions.
Each of these kernels is
singular (or weakly singular), but smooth away from the diagonal ($x=y$).
The discretization of the spatial integrals, therefore, requires
techniques much like those needed for elliptic layer potentials.
We refer the reader to \cite{alpert0} and \cite{qbx} for a discussion
of high order accurate rules. 

When computing
$D_{L,k}$ at each boundary point, we would also like to avoid the 
$O(N_S^2)$ work that would be required by naive evaluation of the integral.
A large number of fast algorithms are now available to reduce the cost
of this step to  $O(N_S)$ or $O(N_S \log N_S)$. 
These include fast multipole methods, kernel-independent fast multipole
methods \cite{kifmm}, HSS and {\cal H}-matrix methods
\cite{HSS,Hmat}, and HBS or recursive 
skeletonization methods \cite{ho,gillman}. 
We propose to use the approach developed in \cite{ho}. That is, each of 
the operators $D_{L,j}(x_m,y_n)$
($j=1,\cdots, k-1$) will be compressed once, with subsequent applications
of the operator computed in  
near optimal complexity time with smaller constant prefactors than
analytically-based fast multipole methods. It is also possible, of course,
to store a compressed version of the full operator $D_L$ or 
$(-\frac{1}{2}I+D_L^\ast)^{-1}$.
While the storage costs are large, they scale nearly linearly with $N_S$.
This is ideal for long-time simulation in stationary domains,
where the same linear system is solved at each time step with a different 
right-hand side.

\subsection{Evaluation of the history part $D_H$}
For the history part, approximating the kernel by $D_A(x,y;t-\tau)$ and
substituting \eqref{4.1} into \eqref{eq:dnh}, we obtain

\begin{equation}\label{6.1}
\begin{aligned}
D_H(\sigma)(x,t)&\approx\int_0^{t-\delta}\int_{\Gamma} 
\sum_{j=1}^{N_2} \tilde{w}_j e^{-\lambda_j (t-\tau)}\\
&\sum_{k=-N_1}^{N_1} w_k e^{s_k (t-\tau)}e^{-\sqrt{s_k} |x-y|}[(x-y)\cdot n_y]
\sigma(y,\tau)ds_yd\tau\\
&=\sum_{j=1}^{N_2} \tilde{w}_j \sum_{k=-N_1}^{N_1} w_k H_{j,k}(x,t),
\end{aligned}
\end{equation}
where each history mode $H_{j,k}$ is given by the formula
\begin{equation}\label{6.2}
H_{j,k}(x,t)=\int_0^{t-\delta}
e^{(-\lambda_j+s_k) (t-\tau)} V_k(x,\tau)
d\tau,
\end{equation}
with $V_k$ given by the formula
\begin{equation}\label{6.3}
V_k(x,\tau)=\int_{\Gamma} e^{-\sqrt{s_k} |x-y|}[(x-y)\cdot n_y]
\sigma(y,\tau)ds_y.
\end{equation}
Here, we have interchanged the order of summation and integration. 

For each fixed $\tau$, $V_k(x,\tau)$ can be discretized using the 
trapezoidal rule and the resulting
summation can again be computed using the fast algorithms in \cite{ho} 
for all $x\in \Gamma$.
The computational cost for this step is 
$\mathcal{O}(N_S \log N_S)$ for each $k$.
Once the $V_k$ have been evaluated, 
each history mode $H_{j,k}$ can be computed recursively, as in 
\cite{greengard2,jiang3}:
\[
H_{j,k}(x,t+\Delta t)= 
e^{(-\lambda_j+s_k) \Delta t} H_{j,k}(x,t) +
\int_{t-\delta}^{t+\Delta t-\delta}
e^{(-\lambda_j+s_k) (t+\Delta t-\tau)} V_k(x,\tau)
d\tau.
\]
(Equivalently, each
history mode $H_{j,k}$ can be seen to satisfy a simple
linear ODE.)
The point is that each history mode can be computed in $\mathcal{O}(1)$
operations at each time step for each $x$.
Since both $N_1$ and $N_2$ are 
$\mathcal{O}(\log(T/\delta))=\mathcal{O}(N_T)$ with $N_T$ 
the total number of time steps, the net computational cost
for the evaluation of the history part at each time step is 
$\mathcal{O}(N_S \log N_S \log N_T +N_S \log^2 N_T)$, with storage requirements
of the order $\mathcal{O}(N_S \log^2 N_T)$.
The computational cost for the entire simulation is 
\[ 
\mathcal{O}(N_S N_T \log N_S \log N_T +N_S N_T \log^2 N_T) \, .
\]

The algorithm of the present paper is embarrassingly parallel in that
the computation of each history mode is independent.
Furthermore, the hierarchical fast algorithms used for
each  $V_k(x,\tau)$ are
themselves amenable to parallelization, and there
is already a substantial body of research and software devoted to that task for 
large-scale problems in three dimensions.

%
%
%
\section{Numerical examples} \label{sec:results}

In this section, we illustrate the accuracy and stability of the sum-of-exponential
approximations for the heat kernels. More precisely, we have implemented
the algorithm outlined in the previous section to solve the exterior Dirichlet problem
governed by the heat equation in two dimensions. The spatial integral in the local parts is 
discretized using the 16th order hybrid Gauss-trapezoidal rule of \cite{alpert0}, while the
spatial integral in the history part is discretized using the trapezoidal rule. 
We have implemented 2nd, 3rd, and 4th order schemes in time (defined by the number
of terms taken in the local Taylor expansion of $\sigma$ in section \ref{sec:loc}),
with numerical experiments carried out using the fourth order version.
We consider four simple boundary curves:
a circle, an ellipse with aspect ratio 2:1,
a crescent, and a smooth hexagram, shown in Figures 1--4, 
respectively. The final time is set to $T=1$ and the
boundary curves are roughly of size $R=5$. We generate boundary data by placing 
heat sources {\em inside} the boundary curve and test the accuracy of our numerical solution
by comparing it with the analytical solution at 20 target points outside the boundary curves.
We use 350 spatial discretization points for the circle, 256 points for the ellipse, 
512 points for the crescent, and 350 points for the hexagram. Since our spatial
integration rules are very high order and the geometry is well-resolved, the accuracy is 
dominated by the discretization error in time.

In each of the tables below, 
$\dt$ is the 
time step, $N_T$ is the total number of time steps, 
$K$ is the condition number of the linear system that
needs to be solved at each time step, 
$E$ is the relative $L^2$ 
error of the numerical solution at
the final time, and $r$ is the ratio of relative $L^2$ errors
for successive time step refinements:
$r(j)=E(j-1)/E(j)$. 

\begin{remark}
Since our examples are intended to demonstrate the accuracy and stability
of the sum-of-exponential approximations for the heat kernels, we have limited 
our attention to modest size problems.
With larger-scale boundary discretizations,
fast solvers will be used to invert the linear systems at each time step 
and to speed up the computation of the spatial integrals in the history part. 
We are currently building such codes, including parallelization.
\end{remark}

\begin{figure}[htbp!]
\begin{center}
  \begin{minipage}[c]{0.33\textwidth}
            \centering\includegraphics[width=.5\textwidth]{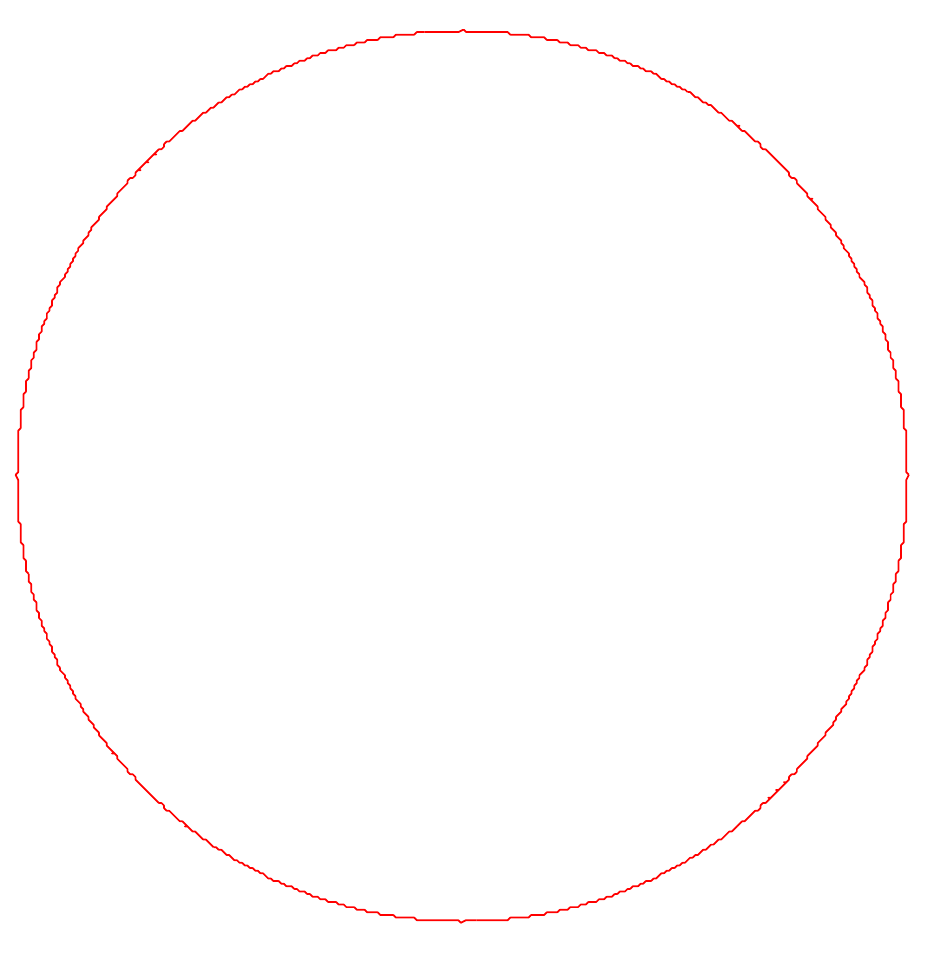}
   \end{minipage}
   \hspace{0.02in}
   \begin{minipage}[c]{0.6\textwidth}
   \begin{tabularx}{\textwidth}{@{\extracolsep{\fill}} |c|c|c|c|c|  }
        \hline
$\dt$ & $N_T$ & $K$  & $E$ & $r$ \\
\hline 
1.00e-1 & 10 & 1.02 & 4.33e-5 &  \\
5.00e-2 & 20 & 1.01 & 3.00e-6 & 14.4 \\
2.50e-2 & 40 & 1.01 & 2.01e-7 & 14.9 \\
1.25e-2 & 80 & 1.01 & 7.57e-9 & 26.5 \\
6.25e-3 & 160& 1.00 & 3.81e-10 & 19.9 \\
\hline
   \end{tabularx}
   \end{minipage}
   \caption{Numerical results for the circle. 
$N_T$ is the total number of time steps, 
$K$ is the condition number of the linear system to be
solved at each time step, $E$ is the relative error at the final time
in $L^2$, and $r$ is the ratio of relative $L^2$ errors
for successive time step refinements:
$r(j)=E(j-1)/E(j)$.}
\label{fig1} 
\end{center}
\end{figure}

\begin{figure}[htbp!]
\begin{center}
  \begin{minipage}[c]{0.33\textwidth}
            \centering\includegraphics[width=.65\textwidth]{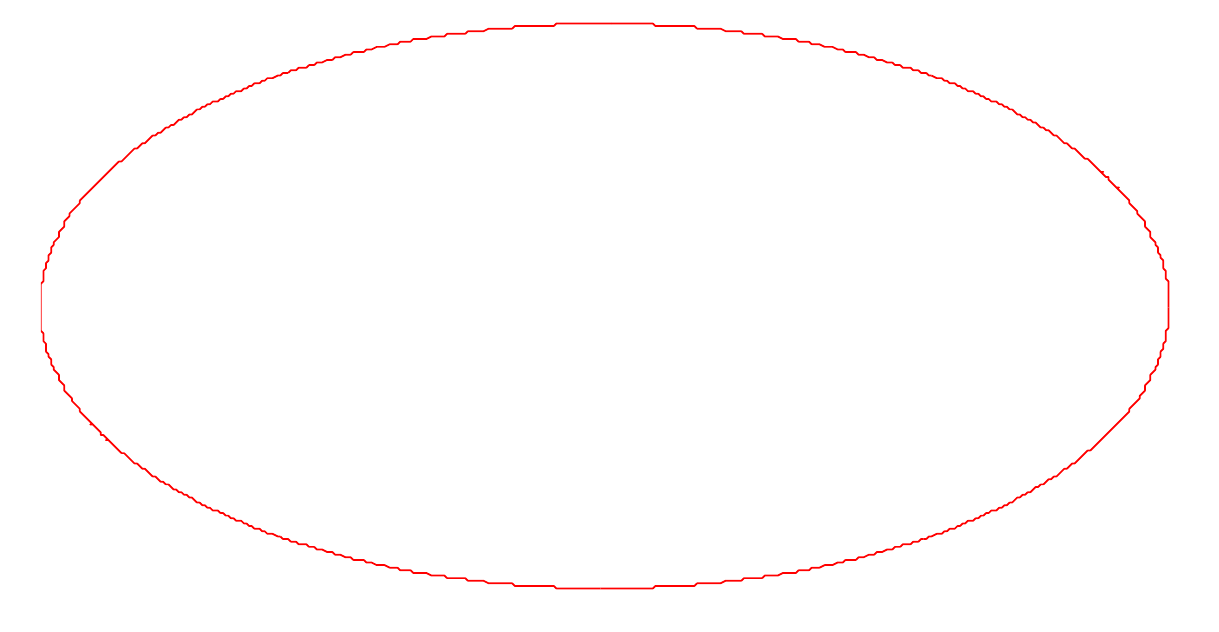}
   \end{minipage}
   \hspace{0.02in}
   \begin{minipage}[c]{0.55\textwidth}
   \begin{tabularx}{\textwidth}{@{\extracolsep{\fill}} |c|c|c|c|c|  }
        \hline 
$\dt$ & $N_T$ & $K$  & $E$ & $r$ \\
\hline 
1.00e-1 & 10 & 1.07 & 1.85e-4 &  \\
5.00e-2 & 20 & 1.05 & 1.55e-5 & 11.9 \\
2.50e-2 & 40 & 1.04 & 1.11e-6 & 13.9 \\
1.25e-2 & 80 & 1.03 & 1.84e-8 & 60.3 \\
6.25e-3 & 160& 1.02 & 6.76e-10 & 27.3 \\
\hline
   \end{tabularx}
   \end{minipage}
   \caption{Numerical results for a 2:1 ellipse.
See text or Fig. \ref{fig1} for explanation.}
\label{fig2} 
\end{center}
\end{figure}

\begin{figure}[htbp!]
\begin{center}
  \begin{minipage}[c]{0.33\textwidth}
            \centering\includegraphics[width=.6\textwidth]{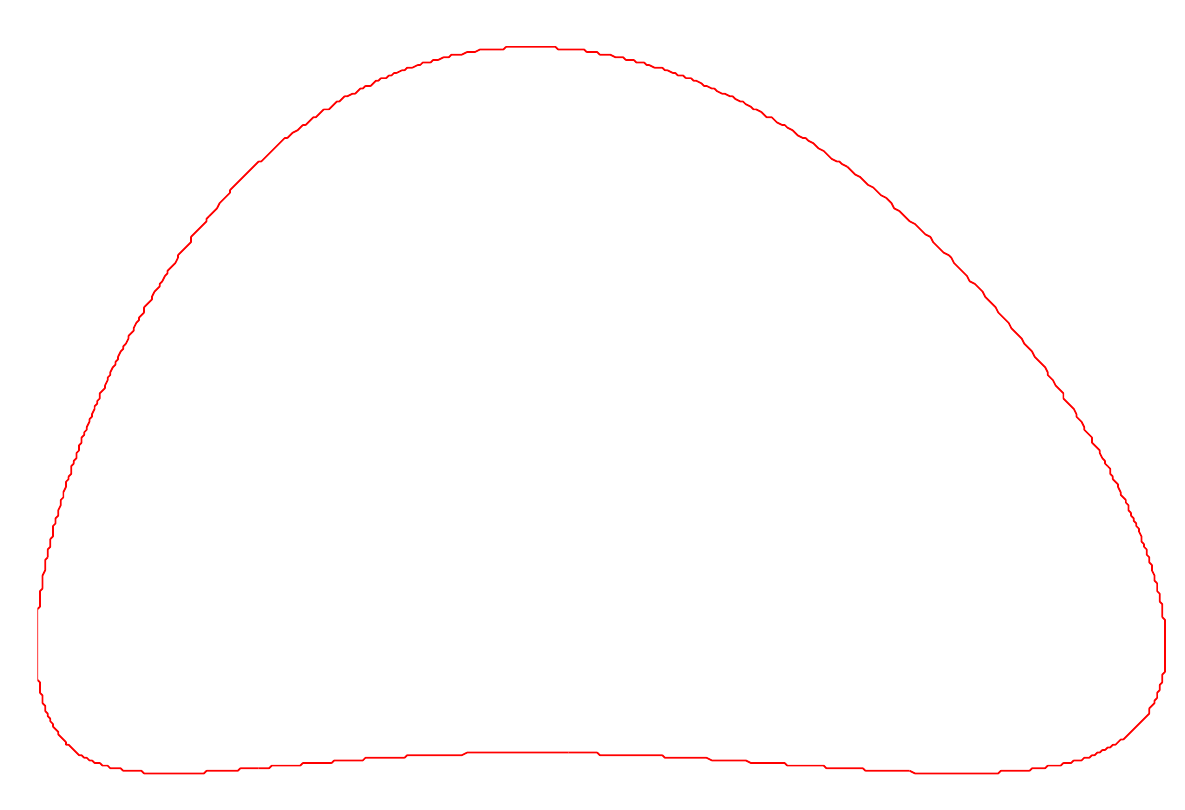}
   \end{minipage}
   \hspace{0.02in}
   \begin{minipage}[c]{0.55\textwidth}
   \begin{tabularx}{\textwidth}{@{\extracolsep{\fill}} |c|c|c|c|c|  }
        \hline
$\dt$ & $N_T$ & $K$  & $E$ & $r$ \\
\hline 
1.00e-1 & 10 & 1.15 & 8.76e-5 &  \\
5.00e-2 & 20 & 1.11 & 3.13e-6 & 28.0 \\
2.50e-2 & 40 & 1.08 & 1.89e-7 & 16.5 \\
1.25e-2 & 80 & 1.06 & 3.16e-9 & 59.8 \\
6.25e-3 & 160& 1.04 & 1.28e-10 & 24.6 \\
\hline
   \end{tabularx}
   \end{minipage}
   \caption{Numerical results for a smooth crescent.
See text or Fig. \ref{fig1} for explanation.}
\label{fig3} 
\end{center}
\end{figure}

\begin{figure}[htbp!]
\begin{center}
  \begin{minipage}[c]{0.33\textwidth}
            \centering\includegraphics[width=.55\textwidth]{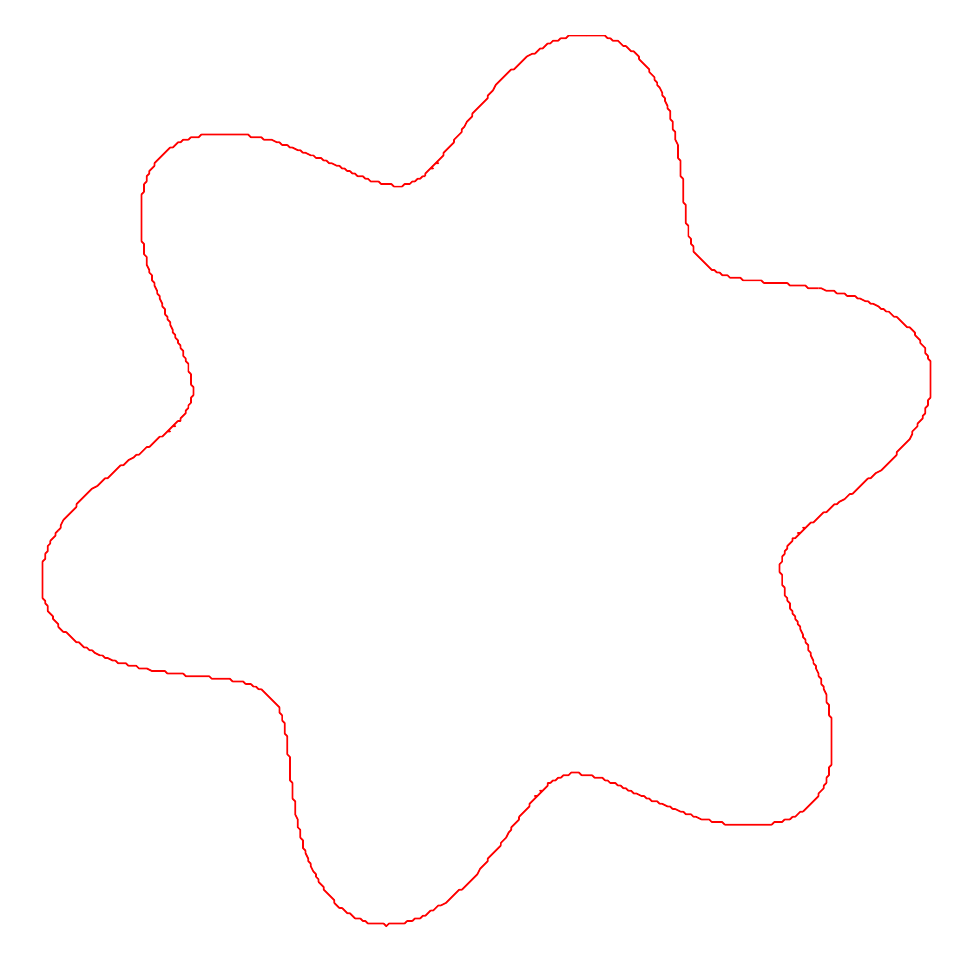}
   \end{minipage}
   \hspace{0.02in}
   \vspace{0.12in}
   \begin{minipage}[c]{0.55\textwidth}
   \begin{tabularx}{\textwidth}{@{\extracolsep{\fill}} |c|c|c|c|c|  }
        \hline
$\dt$ & $N_T$ & $K$  & $E$ & $r$ \\
\hline 
1.00e-1 & 10 & 1.03 & 6.05e-5 &  \\
5.00e-2 & 20 & 1.02 & 3.06e-6 & 19.8 \\
2.50e-2 & 40 & 1.02 & 1.94e-7 & 15.7 \\
1.25e-2 & 80 & 1.01 & 7.80e-9 & 24.5 \\
6.25e-3 & 160& 1.01 & 2.80e-10 & 27.8 \\
\hline
   \end{tabularx}
   \end{minipage}
   \caption{Numerical results for a smooth hexagram.
See text or Fig. \ref{fig1} for explanation.}
\label{fig4} 
\end{center}
\end{figure}

Several observations concerning our results are in order. First, note that
there is no stability issue for large time steps (as expected).
Second, the linear systems which need to be solved at each time step
are extremely well-conditioned, since the compact
operator in the Volterra integral equation has small norm.
Third, the convergence rates are roughly consistent with 
fourth order accuracy (slightly better because of the smoothing behavior of the
heat equation).

Finally, in order for the reader to be able to easily implement the scheme, 
we list the sum-of-exponential approximations for 
the power function
$\frac{1}{t^{3/2}}$ and the 1D heat kernel in Tables \ref{tab4} and \ref{tab5}, respectively.

\begin{table}[htbp!]
\caption{Sum-of-exponential approximation for the power function $\frac{1}{t^{3/2}} \approx \sum_{k=1}^{N}w_ke^{s_k t}$. Here $N=22$. The relative error is $10^{-9}$ for $t\in [10^{-3},1]$. The first column lists the values of $s_k$ and the second column lists the values of $w_k$.}
\begin{center}
\scalebox{0.75}{
\begin{tabular}{|c|c|} \hline
$s_k$  & $w_k$ \\ \hline
 -7.2906159549928551D-001 & 1.4185610815528382D+000 \\
 -3.0048118783719833D+000 & 6.1168393596966855D+000 \\
 -7.1264765899586058D+000 & 1.5711472927135111D+001 \\
 -1.3711947176886415D+001 & 3.4005632951162106D+001 \\
 -2.3888158541731844D+001 & 6.9073954353680932D+001 \\
 -3.9552395196025927D+001 & 1.3693587825572760D+002 \\
 -6.3705812577837150D+001 & 2.6803903557553036D+002 \\
 -1.0092380540293159D+002 & 5.1918775664556608D+002 \\
 -1.5806801843263241D+002 & 9.9539106627427793D+002 \\
 -2.4535153453758187D+002 & 1.8891680379043360D+003 \\
 -3.7789560121839855D+002 & 3.5504717885763807D+003 \\
 -5.7797736857567725D+002 & 6.6101535364473611D+003 \\
 -8.7825970527795960D+002 & 1.2196709015044245D+004 \\
 -1.3264302774624159D+003 & 2.2315687874782194D+004 \\
 -1.9919031777954062D+003 & 4.0515943857625483D+004 \\
 -2.9756729573498451D+003 & 7.3077757126301352D+004 \\
 -4.4253961362173968D+003 & 1.3121449467706907D+005 \\
 -6.5602925574528726D+003 & 2.3547202449444451D+005 \\
 -9.7174072351500563D+003 & 4.2563843229579012D+005 \\
 -1.4450262425240786D+004 & 7.8639206472664094D+005 \\
 -2.1750443538417538D+004 & 1.5127774335123657D+006 \\
 -3.3020803685636522D+004 & 2.6486986425576657D+006 \\
\hline
\end{tabular}
}
\end{center}
\label{tab4}
\end{table}

\begin{table}[htbp!]
\caption{Sum-of-exponential approximation for the 1D heat kernel $G_1=e^{-x^2/4t}/\sqrt{4\pi t}$. $G_1 \approx \sum_{k=-N_1}^{N_1}w_ke^{s_k t}e^{-\sqrt{s_k}x}$. Here $N_1=47$ and only the exponentials in the lower half of the complex plane are listed. The relative error is $10^{-9}$ for $t\in [10^{-3},1]$. The first column lists the values of $s_k$ and the second column lists the values of $w_k$.}
\begin{center}
\scalebox{0.65}{
\begin{tabular}{|c|c|} \hline
$s_k$  & $w_k$ \\ \hline
 +5.2543538566883130D-001 -1.5355921434143971D-001i & 3.3406450069243053D-002 + 7.8694879334856565D-004i\\
 +4.5124072851051300D-001 -4.6901795847816991D-001i & 3.3858524697168449D-002 + 2.3714958079918535D-003i\\
 +2.9882165376338349D-001 -8.0995063945421741D-001i & 3.4768791677853524D-002 + 3.9881352204446229D-003i\\
 +5.9899770775166200D-002 -1.1948744535190035D+000i & 3.6149569249415578D-002 + 5.6587442927134319D-003i\\
 -2.7850156846250274D-001 -1.6446959027588257D+000i & 3.8019542860491549D-002 + 7.4059306342027370D-003i\\
 -7.3476207454759612D-001 -2.1838462972755117D+000i & 4.0404018032102237D-002 + 9.2533381404359474D-003i\\
 -1.3336627843342230D+000 -2.8416087014987785D+000i & 4.3335262806011098D-002 + 1.1225966955375540D-002i\\
 -2.1077320020767232D+000 -3.6537083955326728D+000i & 4.6852944413778977D-002 + 1.3350511787250059D-002i\\
 -3.0990120206246341D+000 -4.6642532348208814D+000i & 5.1004666075762921D-002 + 1.5655723156165313D-002i\\
 -4.3613425792163030D+000 -5.9281292955658031D+000i & 5.5846611194320855D-002 + 1.8172796462103385D-002i\\
 -5.9632850801149733D+000 -7.5139819214310553D+000i & 6.1444303658802629D-002 + 2.0935794138393353D-002i\\
 -7.9918463880908517D+000 -9.5079440821622647D+000i & 6.7873494551196165D-002 + 2.3982106603468702D-002i\\
 -1.0557204464776218D+001 -1.2018314543775274D+001i & 7.5221187251822183D-002 + 2.7352958248766927D-002i\\
 -1.3798692502929331D+001 -1.5181439937409575D+001i & 8.3586814817376690D-002 + 3.1093965310080390D-002i\\
 -1.7892366578989638D+001 -1.9169120201715121D+001i & 9.3083585564254306D-002 + 3.5255753171782450D-002i\\
 -2.3060567848491313D+001 -2.4197939613161662D+001i & 1.0384001506633338D-001 + 3.9894641457634002D-002i\\
 -2.9583998639207351D+001 -3.0541030203791603D+001i & 1.1600166529906714D-001 + 4.5073406179197625D-002i\\
 -3.7816968335105827D+001 -3.8542906477049684D+001i & 1.2973311446494412D-001 + 5.0862129255676444D-002i\\
 -4.8206637106159420D+001 -4.8638177144812161D+001i & 1.4522018415709023D-001 + 5.7339146901350516D-002i\\
 -6.1317302675453163D+001 -6.1375150182749685D+001i & 1.6267245400022495D-001 + 6.4592109714717016D-002i\\
 -7.7861049219288773D+001 -7.7445613273673700D+001i & 1.8232609779849215D-001 + 7.2719168815048321D-002i\\
 -9.8736423044743205D+001 -9.7722407114526959D+001i & 2.0444707957048894D-001 + 8.1830304077826238D-002i\\
 -1.2507723565031891D+002 -1.2330683231921709D+002i & 2.2933475272201531D-001 + 9.2048812443472808D-002i\\
 -1.5831414482743193D+002 -1.5558846474502963D+002i & 2.5732591106254793D-001 + 1.0351297643999625D-001i\\
 -2.0025235847869459D+002 -1.9632062801341692D+002i & 2.8879934648604044D-001 + 1.1637793549892739D-001i\\
 -2.5316968150772374D+002 -2.4771562239065099D+002i & 3.2418097499311860D-001 + 1.3081778538823258D-001i\\
 -3.1994023103558078D+002 -3.1256488222792592D+002i & 3.6394960042284941D-001 + 1.4702793417289575D-001i\\
 -4.0419053933177224D+002 -3.9439058811408586D+002i & 4.0864339389209720D-001 + 1.6522774658533385D-001i\\
 -5.1049652293304160D+002 -4.9763696830499521D+002i & 4.5886717662584403D-001 + 1.8566351259073638D-001i\\
 -6.4463201600277023D+002 -6.2791167965116256D+002i & 5.1530060473377282D-001 + 2.0861178031960287D-001i\\
 -8.1388236661047688D+002 -7.9229037823057672D+002i & 5.7870736669405309D-001 + 2.3438309847056926D-001i\\
 -1.0274401283961181D+003 -9.9970102193834418D+002i & 6.4994551800979161D-001 + 2.6332621882780033D-001i\\
 -1.2969043389592882D+003 -1.2614087777944171D+003i & 7.2997909289244700D-001 + 2.9583281576374076D-001i\\
 -1.6369105024555524D+003 -1.5916278709100643D+003i & 8.1989115010796476D-001 + 3.3234278659416305D-001i\\
 -2.0659254928655710D+003 -2.0082936066805410D+003i & 9.2089842952925771D-001 + 3.7335020451285178D-001i\\
 -2.6072505517969930D+003 -2.5340364973160094D+003i & 1.0343678177356390D+000 + 4.1941000466434852D-001i\\
 -3.2902868569904354D+003 -3.1974114007865637D+003i & 1.1618348454808407D+000 + 4.7114549383439813D-001i\\
 -4.1521323987955984D+003 -4.0344484308358847D+003i & 1.3050244673476217D+000 + 5.2925678538340226D-001i\\
 -5.2395968963294799D+003 -5.0906098731829061D+003i & 1.4658744047907917D+000 + 5.9453027356908161D-001i\\
 -6.6117441901645352D+003 -6.4232593945873459D+003i & 1.6465613684596623D+000 + 6.6784927547013706D-001i\\
 -8.3431001974123810D+003 -8.1047776557969619D+003i & 1.8495305146549577D+000 + 7.5020598452333265D-001i\\
 -1.0527700663992231D+004 -1.0226493547754852D+004i & 2.0775285345412993D+000 + 8.4271489743589800D-001i\\
 -1.3284198561072539D+004 -1.2903644569677639D+004i & 2.3336408238968738D+000 + 9.4662789617368170D-001i\\
 -1.6762308525499571D+004 -1.6281635763758291D+004i & 2.6213332364019597D+000 + 1.0633511891228977D+000i\\
 -2.1150938363365654D+004 -2.0543937150208803D+004i & 2.9444989854949983D+000 + 1.1944643406826623D+000i\\
 -2.6688449265921696D+004 -2.5922048598801452D+004i & 3.3075113294981566D+000 + 1.3417416468073389D+000i\\
 -3.3675602004538763D+004 -3.2708073362471001D+004i & 3.7152827529767691D+000 + 1.5071761457643933D+000i\\
\hline
\end{tabular}
}
\end{center}
\label{tab5}
\end{table}

\section{Conclusions}

We have developed an efficient separated sum-of-exponentials approximation
for the heat kernel in any dimension. The number of exponentials needed
is $\mathcal{O}(\log^2 (\frac{T}{\delta}))$ to obtain an approximation 
that is valid for $t\in [\delta, T]$ and $x\in \bbR^d$ 
for any prescribed precision $\epsilon$, but
only $\mathcal{O}(\log (\frac{T}{\delta}))$ of these 
modes involve the spatial variables. Such approximations can be  
combined with the local quadratures
of \cite{li2} and the fast algorithms of \cite{greengard1,greengard2,ho}
to create efficient, accurate and robust methods for the solution of 
boundary value problems governed by the heat equation in complex
geometry.

We have assumed here that the time step $\Delta t$ is fixed throughout
the simulation, but note that in many applications, adaptive time-stepping 
will be needed. 
We are currently developing heat solvers 
in both two and three dimensions that make use of the sum-of-exponentials
representation, permit adaptive time-steps, and incorporate fast algorithms for the 
spatial integrals.
We will report on their performance at a later date.

\end{document}